\documentclass[a4paper,fleqn, 10pt]{article}
\usepackage{hyperref,amsmath,makeidx,amsfonts,a4wide,amssymb,amsthm,longtable, color}
\newtheorem{theorem}{Theorem}

\numberwithin{equation}{section}
\numberwithin{lemma}{section}
\numberwithin{theorem}{section}
\numberwithin{corollary}{section}

\setlength{\topmargin}{1cm}
\setlength{\headheight}{0pt}
\setlength{\headsep}{0pt}
\setlength{\topskip}{0cm}
\setlength{\evensidemargin}{0pt}
\setlength{\oddsidemargin}{0pt}

\textheight 22cm
\textwidth 15cm
\allowdisplaybreaks

\begin{document}
\title{On the discrete analogues of Appell function $F_2$}
\author{Ravi Dwivedi$^{1,}$\footnote{E-mail: dwivedir999@gmail.com}   \, and Vivek Sahai$^{2,}$\footnote{E-mail: sahai\_vivek@hotmail.com (Corresponding author)} \\ ${}^{1}$Department of Science, SAGEMMC, Jagdalpur, Bastar, CG, 494001, India; \\ ${}^{2}$Department of Mathematics and Astronomy, Lucknow University, \\ Lucknow 226007, India.}
\maketitle 
\begin{abstract}
	In this paper, we introduce  two distinct discrete forms of Appell function $F_2$. We  determine their convergence domains, integral representations as well as difference-differential equations that are satisfied by these discrete analogues of $F_2$. We have also obtained the difference-differential formulas, finite and infinite summation formulas, and a complete list of difference-differential recursion relations obeyed by these discrete functions.
	
\medskip
\noindent \textbf{AMS Subject Classification:} 33C65.

\medskip
\noindent \textbf{Keywords:} Appell functions, discrete hypergeometric functions.
\end{abstract}
\section{Introduction} 
This paper is the continuation of the research work initiated in \cite{ds16}, where discrete forms of Appell function $F_1$ were discussed. In this paper, we introduce the discrete forms of the second Appell function $F_2$, \cite{emo, sk}:
\begin{align}
	F_2 (a, b, b'; c, c'; x, y) & = \sum_{m, n = 0}^{\infty} \frac{(a)_{m+ n} \, (b)_m  \, (b')_n}{(c)_{m} \, (c')_{n}} \, \frac{x^m \, y^n}{m ! \, n!}, \quad \lvert x \rvert + \lvert y \rvert < 1.
\end{align}
This function has own importance due to its appearance in physical problems 
%In particular .... 
and this wide range of applications has led to the treatment of Appell function $F_2$ in many ways \emph{viz.} basic (or $q$-) version \cite{gr}, finite field analogue \cite{bts}, matrix point of view \cite{ds1, ds5}, matrix variate form \cite{nn}, to name a few. 
 
  Throughout the paper, the set of natural numbers and complex numbers will be denoted by $\mathbb{N}$ and $\mathbb{C}$, respectively. Let  $a$, $b_1$, $b_2$, $c_1$, $c_2$, $t$, $t_1$ and $t_2$ be complex numbers such that $\Re (c_1), \ \Re (c_2) \ne 0, -1, -2, \dots$ and $k$, $k_1$, $k_2 \in \mathbb{N}$. Then, we define two  discrete analogues of Appell  function ${F}_2$ as follows: 
\begin{align}
	\mathcal{F}^{(1)}_2 & = \mathcal{F}^{(1)}_2(a, b_1, b_2; c_1, c_2; t_1, t_2, k_1, k_2, x, y)\nonumber\\ 
	& = \sum_{m,n\geq0} \frac{(a)_{m+n} \, (b_1)_m \, (b_2)_n \, (-1)^{m k_1} \, (-t_1)_{mk_1} \, (-1)^{n k_2} \,  (-t_2)_{nk_2}}{ (c_1)_{m} \, (c_2)_n\, m! \, n!} \ x^m \, y^n; \label{3.1}
\end{align} 
\begin{align}
	\mathcal{F}^{(2)}_2 & =	\mathcal{F}^{(2)}_2(a, b_1, b_2; c_1, c_2; t, k, x, y)\nonumber\\
	& = \sum_{m,n\geq0} \frac{(a)_{m+n} \, (b_1)_m \, (b_2)_n \, (-1)^{(m + n) k} \, (-t)_{(m + n)k}}{(c_1)_{m} \, (c_2)_n \, m! \, n!} \ x^m \, y^n, \label{3.2}
\end{align} 
where $(-t)_{m \, k}$ is given by \cite{bc4}
\begin{align}
	(-t)_{m \, k} &  = 	k^{m \, k} \, \prod_{i = 0}^{k - 1} \left(\frac{-t + i}{k}\right)_m.
\end{align}
In the subsequent sections, we explore various properties of these discrete functions \emph{viz.} regions of convergence, difference-differential equations, integral representations, difference-differential formulas, recursion relations and finite-infinite sums. For deeper understanding, we suggest to browse the papers and references therein \cite{bc1}-\cite{bc5}. The results obtained here are believed to be new. Also their particular cases bring out  the results for Appell function $F_2$ and Kamp\'e de F\'eriet hypergeometric functions. 

\section{The discrete Appell function $\mathcal{F}^{(1)}_2$}	
In this section, we establish the particular values of the function $\mathcal{F}^{(1)}_2$, its area of convergence, the difference equations and the integral representations. At the end of section, we derive the discrete version of the Humbert functions $\psi_1$ and $\psi_2$ as limiting cases of $\mathcal{F}^{(1)}_2$. We start our findings with the special values of $\mathcal{F}^{(1)}_2$. The particular values of $k_1$ and $k_2$, bring down the discrete Appell function $\mathcal{F}^{(1)}_2$ into some known classical functions. Especially when $k_1 = k_2 = 0$, we get
	  \begin{align}
	  	\mathcal{F}^{(1)}_2(a, b_1, b_2; c_1, c_2; t_1, t_2, 0, 0, x, y)  = F_2 (a, b_1, b_2; c_1, c_2; x, y).
	  \end{align}
      When $k_1 = 0$ and $k_2 = 1$, we have
      \begin{align}
     	& \mathcal{F}^{(1)}_2(a, b_1, b_2; c_1, c_2; t_1, t_2, 0, 1, x, y)
     	%\nonumber\\
     %	& = \sum_{m, n \geq 0} \frac{(a)_{m+n} \, (b_1)_m \, (b_2)_n \, (-1)^{n} \,  (-t_2)_n}{ (c_1)_{m} \, (c_2)_n \, m! \, n!} \ x^m \, y^n\nonumber\\
     %& 
     = F_{{0}:{1}, {1}} ^{{1}:{1}, {2}}\left(\begin{array}{ccc}
      	a: & b_1,  & b_2, -t_2\\
      	-: & c_1, & c_2 
      \end{array}; x, \ - y\right), 
      \end{align}
  where $F_{{0}:{1}, {1}} ^{{1}:{1}, {2}}$ is the Kamp\'e de F\'eriet  hypergeometric function;  defined in general by \cite{sk}
  \begin{align}
  	& F_{{l_2}:{l'_2}, {l''_2}} ^{{l_1}:{l'_1}, {l''_1}}\left(\begin{array}{ccc}
  			A: & B, & C\\
  			D: &E, &F 
  		\end{array}; x, \ y\right)\nonumber\\
  	& = %\displaystyle  
  	\sum_{m,n\geq 0} \ \frac{\prod_{i=1}^{l_1} (a_i)_{m+n} \, \prod_{i=1}^{l'_1} (b_i)_{m} \, \prod_{i=1}^{l''_1} (c_i)_{n}}{\prod_{i=1}^{l_2} (d_i)_{m+n}   \, \prod_{i=1}^{l'_2} (e_i)_{m}  \, \prod_{i=1}^{l''_2} (f_i)_{n}} \ \frac{x^m \, y^n}{m! \, n!},\label{c1eq71}
  \end{align}
where $A$ denote the sequence of complex numbers $a_1, \dots, a_{l_1}$. Similarly, for $k_1 = 1$ and $k_2 = 0$, we have
 \begin{align}
 	& \mathcal{F}^{(1)}_2(a, b_1, b_2; c_1, c_2; t_1, t_2, 1, 0, x, y)
 	%\nonumber\\
 %	& = \sum_{m, n \geq 0} \frac{(a)_{m+n} \, (b_1)_m \, (b_2)_n \, (-1)^{m} \,  (-t_1)_m}{ (c_1)_{m} \, (c_2)_n \, m! \, n!} \ x^m \, y^n\nonumber\\
 	%& 
 	= F_{{0}:{1}, {1}} ^{{1}:{2}, {1}}\left(\begin{array}{ccc}
 		a: & b_1, -t_1,  & b_2\\
 		-: & c_1, & c_2 
 	\end{array}; - x, \ y\right). 
 \end{align}
Further, for $k_1 = k_2 = 1$, we have
\begin{align}
	& \mathcal{F}^{(1)}_2(a, b_1, b_2; c_1, c_2; t_1, t_2, 1, 1, x, y)
	%\nonumber\\
%	& = \sum_{m, n \geq 0} \frac{(a)_{m+n} \, (b_1)_m \, (b_2)_n \,  (-1)^{m} \,  (-t_1)_m \, (-1)^{n} \,  (-t_2)_n}{ (c_1)_{m} \, (c_2)_n \, m! \, n!} \ x^m \, y^n\nonumber\\
	%& 
	= F_{{0}:{1}, {1}} ^{{1}:{2}, {2}}\left(\begin{array}{ccc}
		a: & b_1, -t_1,  & b_2, -t_2\\
		-: & c_1, & c_2 
	\end{array}; - x, \ - y\right). 
\end{align}
Now, we analyse the convergence of the discrete function $\mathcal{F}^{(1)}_2$. Let $\mathcal{A}_{m, n} x^m \, y^n$ be the general term of the discrete function $\mathcal{F}^{(1)}_2$. Then
  \begin{align}
  &	\left \vert \mathcal{A}_{m, n} x^m \, y^n \right \vert\nonumber\\
   & = \left \vert \frac{(a)_{m+n} \, (b_1)_m \, (b_2)_n \, (-t_1)_{mk} \, (-t_2)_{nk}}{ (c_1)_{m} \, (c_2)_n \, m! \, n!} \ x^m \, y^n \right \vert\nonumber\\
  	& < \left \vert \frac{\Gamma (c_1) \, \Gamma (c_2)}{\Gamma (a) \, \Gamma (b_1) \, \Gamma (b_2) \, \Gamma (-t_1) \, \Gamma (-t_2)} \right \vert \nonumber\\
  	& \quad \times \left \vert \frac{\Gamma (a + m + n) \, \Gamma (b_1 + m) \, \Gamma (b_2 + n) \, \Gamma (-t_1 + mk_1) \, \Gamma (-t_2 + nk_2)}{\Gamma (c_1 + m) \, \Gamma (c_2 + n) \Gamma (m + 1) \, \Gamma (n + 1)} \right \vert \, \vert x \vert^m \, \vert y\vert^n.
  \end{align}
The  Stirling formula, \cite{emo}
\begin{align}
	\lim_{n \to \infty} \Gamma (\mu + n) = \sqrt (2 \pi) \, n^{\mu + n - \frac{1}{2}} \, e^{-n},
\end{align}
for large values of $m$ and $n$, yield
\begin{align}
	&	\left \vert \mathcal{A}_{m, n} x^m \, y^n \right \vert\nonumber\\
	& < \left \vert \frac{2 \pi \, \Gamma (c_1) \, \Gamma (c_2)}{\Gamma (a) \, \Gamma (b_1) \, \Gamma (b_2) \, \Gamma (-t_1) \, \Gamma (-t_2)} \right \vert \nonumber\\
	& \quad \times \left \vert  (m + n)^{a - 1} \, m^{b_1 - c_1} \, n^{b_2 - c_2} \, (mk_1)^{mk_1 - t_1 - \frac{1}{2}} \, (nk_2)^{nk_2 - t_2 - \frac{1}{2}} e^{- (mk_1 + nk_2)} \right \vert \nonumber\\
	& \quad \times  \left\vert \frac{(m + n)!}{m ! \, n!} x^m \, y^n\right\vert.
\end{align}
Consider $N > \left \vert \frac{2 \pi \, \Gamma (c_1) \, \Gamma (c_2)}{\Gamma (a) \, \Gamma (b_1) \, \Gamma (b_2) \, \Gamma (-t_1) \, \Gamma (-t_2)} \right \vert$. Then
\begin{align}
		\left \vert \mathcal{A}_{m, n} x^m \, y^n \right \vert & < \frac{N \, (mk_1)^{mk_1 - t_1 - \frac{1}{2}} \, (nk_2)^{nk_2 - t_2 - \frac{1}{2}}}{ (m + n)^{1 - a} \, m^{c_1 - b_1} \, n^{c_2 - b_2} \, e^{ (mk_1 + nk_2)}} \, \left\vert \frac{(m + n)!}{m ! \, n!} x^m \, y^n\right\vert\nonumber\\
		& < \frac{N \, (mk_1)^{mk_1 - t_1 - \frac{1}{2}} \, (nk_2)^{nk_2 - t_2 - \frac{1}{2}}}{ (m + n)^{1 - a} \, m^{c_1 - b_1} \, n^{c_2 - b_2} \, e^{ (mk_1 + nk_2)}} (\vert x\vert + \vert y \vert)^{m + n}.
\end{align}
For $k_1, k_2 \in \mathbb{N}, t_1, t_2 \in \mathbb{C}$ and $\vert x\vert + \vert y \vert < 1$, $\left \vert \mathcal{A}_{m, n} x^m \, y^n \right \vert \to 0$ as $m, n \to \infty$. Hence the discrete function $\mathcal{F}^{(1)}_2$ converges absolutely. 
\subsection{Difference equations}
If $\Theta_{t}$ is considered as the discrete analogue of differential operator $t  \frac{d}{dt}$, defined by $\Theta_t : = t \, \rho_t \, \Delta_t$, where $\Delta_t f(t) = f(t + 1) - f(t)$ and $\rho_t \, f (t) = f(t - 1)$, then we have
\begin{align}
\Theta_t \, ((-1)^{nk} \, (-t)_{nk}) & = n\, k \, (-1)^{nk} \, (-t)_{nk}. 
\end{align} 
With this assumption, we get
\begin{align}
	& \Theta_{t_1} \left(\frac{1}{k_1} \Theta_{t_1} + c_1 - 1\right) \, \mathcal{F}^{(1)}_2\nonumber \\
	& = \sum_{m,n \geq 0} \frac{(a)_{m+n} \, (b_1)_m \, (b_2)_n \, (-1)^{m k_1} \, (-t_1)_{mk_1} \, (-1)^{n k_2} (-t_2)_{nk_2}}{ (c_1)_{m} \, (c_2)_n \, m! \, n!} \ x^m \, y^n \, mk_1 \, (c_1 + m - 1)\nonumber\\
%	& = k_1  \sum_{m \ge 1, n \ge 0} \frac{(a)_{m+n} \, (b_1)_m \, (b_2)_n \, (-1)^{(m + n) k} \, (-t_1)_{mk} \, (-t_2)_{nk}}{ (c_1)_{m - 1} \, (c_2)_n \, (m - 1)! \, n!} \ x^m \, y^n  \nonumber\\
	& = k_1 \, \sum_{m, n \ge 0} \frac{(a)_{m+n + 1} \, (b_1)_{m + 1} \, (b_2)_n \, (-1)^{(m + 1) k_1} \, (-t_1)_{(m + 1)k_1} \, (-1)^{n k_2} \, (-t_2)_{nk_2}}{ (c_1)_{m} \, (c_2)_n \, m! \, n!} \ x^{m + 1} \, y^n\nonumber\\
%	& = k \, \sum_{m, n \ge 0} (a + m + n) \, (b_1 + m) \, (-1)^k \, (-t_1)_k \, x \nonumber\\
%	& \quad \times \frac{(a)_{m+n} \, (b_1)_{m} \, (b_2)_n \, (-1)^{(m + n) k} \, (-t_1 + k)_{m k} \, (-t_2)_{nk}}{ (c_1)_{m} \, (c_2)_n \, m! \, n!} \, x^{m} \, y^n\nonumber\\
	& = k_1 \, \sum_{m, n \ge 0} (a + m + n) \, (b_1 + m) \, (-1)^{k_1} \, (-t_1)_{k_1} \, x \, \rho_{t_1}^{k_1} \nonumber\\
	& \quad \times \frac{(a)_{m+n} \, (b_1)_{m} \, (b_2)_n \, (-1)^{m k_1} \, (-t_1)_{m k_1} \, (-1)^{n k_2} \, (-t_2)_{nk_2}}{(c_1)_{m} \, (c_2)_n \, m! \, n!} \ x^{m} \, y^n\nonumber\\
	& = k_1 \, (-1)^{k_1} \, (-t_1)_{k_1} \, x \, \rho_{t_1}^{k_1} \,  \left(\frac{1}{k_1} \Theta_{t_1} + \frac{1}{k_2} \Theta_{t_2} + a\right)  \left(\frac{1}{k_1} \Theta_{t_1}  + b_1\right) \, \mathcal{F}^{(1)}_2.
\end{align}
Thus, we arrive at
\begin{align}
	\left[\Theta_{t_1} \left(\frac{1}{k_1} \Theta_{t_1} + c_1 - 1\right)  - k_1 \, (-1)^{k_1}  (-t_1)_{k_1} \, x \, \rho_{t_1}^{k_1}   \left(\frac{1}{k_1} \Theta_{t_1} + \frac{1}{k_2} \Theta_{t_2} + a\right) \left(\frac{1}{k_1} \Theta_{t_1}  + b_1\right)\right] \mathcal{F}^{(1)}_2=0.\label{1.15}
\end{align}	    
	    Similarly
\begin{align}
	\left[\Theta_{t_2} \left(\frac{1}{k_2} \Theta_{t_2} + c_2 - 1\right)  - k_2 \, (-1)^{k_2}  (-t_2)_{k_2} \, y \, \rho_{t_2}^{k_2}  \left(\frac{1}{k_1} \Theta_{t_1} + \frac{1}{k_2} \Theta_{t_2} + a\right)  \left(\frac{1}{k_2} \Theta_{t_2}  + b_2\right) \right] \mathcal{F}^{(1)}_2=0.\label{1.16}  
\end{align}
Equations \eqref{1.15} and \eqref{1.16} are difference equations satisfied by the discrete Appell function $\mathcal{F}^{(1)}_2$.

\subsection{Integral representations}
We now establish the integral representations for discrete Appell  function $\mathcal{F}^{(1)}_2$.
\begin{theorem}\label{t7}
	Let $a$, $b_1$, $b_2$, $c_1$, $c_2$, $t_1$, $t_2$ be complex numbers and $k_1, k_2 \in \mathbb{N}$. Then for $\vert x\vert  + \vert y\vert < 1$, the discrete function defined in \eqref{3.1} can be represented in the integral form as
	\begin{align}
	& \mathcal{F}^{(1)}_2(a, b_1, b_2; c_1, c_2; t_1, t_2, k_1, k_2 , x, y)\nonumber\\
	& =\Gamma \left(\begin{array}{c}
		c_1, c_2\\
		b_1, b_2, c_1 - b_1, c_2 - b_2
	\end{array}\right) \int_{0}^{1} \int_{0}^{1} u^{b_1 - 1} v^{b_2 - 1} (1-u)^{c_1 -  b_1 - 1} (1 - v)^{c_2 - b_2 - 1} \nonumber\\
& \quad \times F_{{0}:{0}; {0}} ^{{1}:{k_1}; {k_2}}\left(\begin{array}{ccc}
	a: & \frac{- t_1}{k_1}, \dots, \frac{- t_1 + k_1 - 1}{k_1} ; & \frac{- t_2}{k_2}, \dots, \frac{- t_2 + k_2 - 1}{k_2}\\
	-: & - ; & - 
\end{array}; (-k_1)^{k_1} \, u x,  (-k_2)^{k_2} \, v y\right) du \, dv. \label{1.18}
\end{align}
\end{theorem}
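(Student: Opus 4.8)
The plan is to convert the double series \eqref{3.1} into the stated double integral by inserting, for each of the two quotients $(b_1)_m/(c_1)_m$ and $(b_2)_n/(c_2)_n$, the classical Eulerian Beta-integral representation $\frac{(b)_m}{(c)_m}=\frac{\Gamma(c)}{\Gamma(b)\,\Gamma(c-b)}\int_0^1 u^{b+m-1}(1-u)^{c-b-1}\,du$, which holds whenever $\Re(b)>0$ and $\Re(c-b)>0$ and is nothing but $B(b+m,c-b)/B(b,c-b)$. Accordingly I tacitly assume the parameter restrictions $\Re(b_1),\Re(b_2)>0$ and $\Re(c_1-b_1),\Re(c_2-b_2)>0$ that render these integrals meaningful.

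First I would isolate in the summand of \eqref{3.1} the $m$-dependent factor $(b_1)_m/(c_1)_m$ and the $n$-dependent factor $(b_2)_n/(c_2)_n$ and replace each by its Beta integral in a variable $u$, respectively $v$. This produces exactly the gamma-quotient prefactor $\Gamma(c_1)\Gamma(c_2)/[\Gamma(b_1)\Gamma(b_2)\Gamma(c_1-b_1)\Gamma(c_2-b_2)]$ and the integrand $u^{b_1-1}v^{b_2-1}(1-u)^{c_1-b_1-1}(1-v)^{c_2-b_2-1}$ of \eqref{1.18}, while the surplus powers $u^m$ and $v^n$ merge with $x^m y^n$ into $(ux)^m(vy)^n$.

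Next I would interchange the summation and the integration. This is where I would invoke the absolute convergence of $\mathcal{F}^{(1)}_2$ for $\lvert x\rvert+\lvert y\rvert<1$ established in the preceding subsection, which together with the boundedness of the Beta integrands on $[0,1]^2$ lets me apply Fubini's theorem. The series remaining under the integral sign is then $\sum_{m,n\geq0}\frac{(a)_{m+n}(-1)^{mk_1}(-t_1)_{mk_1}(-1)^{nk_2}(-t_2)_{nk_2}}{m!\,n!}(ux)^m(vy)^n$.

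The final and decisive step is to recognize this residual series as the Kamp\'e de F\'eriet function in \eqref{1.18}. Here I would apply the multiplication identity $(-t)_{mk}=k^{mk}\prod_{i=0}^{k-1}\left(\tfrac{-t+i}{k}\right)_m$ to both $(-t_1)_{mk_1}$ and $(-t_2)_{nk_2}$, turning each into a product of $k_1$, respectively $k_2$, shifted Pochhammer symbols. Absorbing the accompanying factors $(-1)^{mk_1}k_1^{mk_1}=\big((-k_1)^{k_1}\big)^m$ and $(-1)^{nk_2}k_2^{nk_2}=\big((-k_2)^{k_2}\big)^n$ into $(ux)^m$ and $(vy)^n$ converts the arguments into $(-k_1)^{k_1}ux$ and $(-k_2)^{k_2}vy$; comparing the result with the general definition \eqref{c1eq71}, taken with $l_1=1$, $l'_1=k_1$, $l''_1=k_2$ and empty lower-parameter lists, identifies the sum with $F_{{0}:{0};{0}}^{{1}:{k_1};{k_2}}$ whose $m$-parameters are $\frac{-t_1}{k_1},\dots,\frac{-t_1+k_1-1}{k_1}$ and whose $n$-parameters are $\frac{-t_2}{k_2},\dots,\frac{-t_2+k_2-1}{k_2}$, exactly as displayed in \eqref{1.18}. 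I expect the only genuine obstacle to be the careful bookkeeping of the signs and powers of $k_1,k_2$ when applying the multiplication formula, so that the two arguments emerge precisely in the form $(-k_1)^{k_1}ux$ and $(-k_2)^{k_2}vy$; the exchange of sum and integral is routine once the absolute convergence already proved is in hand.
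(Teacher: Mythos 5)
Your proposal follows essentially the same route as the paper's own proof: insert the Euler Beta-integral representations for $(b_1)_m/(c_1)_m$ and $(b_2)_n/(c_2)_n$, interchange summation and integration, and apply the Pochhammer multiplication formula $(-t)_{mk}=k^{mk}\prod_{i=0}^{k-1}\left(\frac{-t+i}{k}\right)_m$ to identify the inner series as $F_{{0}:{0};{0}}^{{1}:{k_1};{k_2}}$ with arguments $(-k_1)^{k_1}ux$ and $(-k_2)^{k_2}vy$. The only difference is that you make explicit the parameter restrictions $\Re(b_1),\Re(b_2)>0$, $\Re(c_1-b_1),\Re(c_2-b_2)>0$ and the Fubini justification for the interchange, both of which the paper passes over in silence.
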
	
\begin{proof}
Using 
\begin{align}
\frac{(b_1)_{m}}{(c_1)_{m}}
 & = \Gamma \left(\begin{array}{c}
 	c_1\\
 	b_1, c_1 - b_1
 \end{array}\right) \int_{0}^{1} u^{b_1 + m - 1} \, (1-u)^{c_1 - b_1 - 1} \, du, \label{58}
\end{align}
and 
\begin{align}
	\frac{(b_2)_{n}}{(c_2)_{n}}
	& = \Gamma \left(\begin{array}{c}
		c_2\\
		b_2, c_2 - b_2
	\end{array}\right) \int_{0}^{1} v^{b_2 + n - 1} \, (1-v)^{c_2 - b_2 - 1} \, dv, \label{58}
\end{align}
we can write
	\begin{align}
	& \mathcal{F}^{(1)}_2(a, b_1, b_2; c_1, c_2; t_1, t_2, k_1, k_2, x, y)\nonumber\\
	& =\Gamma \left(\begin{array}{c}
		c_1, c_2\\
		b_1, b_2, c_1 - b_1, c_2 - b_2
	\end{array}\right) \sum_{m, n \geq 0} \int_{0}^{1} \int_{0}^{1}  u^{b_1 + m - 1} \, (1-u)^{c_1 - b_1 - 1} \, v^{b_2 + n - 1} \nonumber\\
& \quad \times (1 - v)^{c_2 - b_2 - 1} \, \frac{(a)_{m + n} \, (-1)^{m k_1} \, (-t_1)_{mk_1} \, (-1)^{n k_2} \, (-t_2)_{nk_2}}{ m! \, n!} \ x^m \, y^n \, du \, \nonumber\\
& = \Gamma \left(\begin{array}{c}
	c_1, c_2\\
	b_1, b_2, c_1 - b_1, c_2 - b_2
\end{array}\right) \int_{0}^{1} \int_{0}^{1} u^{b_1 - 1} v^{b_2 - 1} (1-u)^{c_1 -  b_1 - 1} (1 - v)^{c_2 - b_2 - 1} \nonumber\\
& \quad \times \sum_{m, n \geq 0} \frac{(a)_{m + n}  \, (-1)^{m k_1} \, (-t_1)_{mk_1} \, (-1)^{ n k_2} \,  (-t_2)_{nk_2}  \, (u \, x)^m \, (uy)^n}{ m!, n! } \, du \, dv.
\end{align}
The implication of the result
\begin{align}
	(-t)_{mk} & = k^{mk} \, \left(\frac{-t}{k}\right)_m \, \left(\frac{-t + 1}{k}\right)_m \cdots \left(\frac{-t + k - 1}{k}\right)_m,
\end{align}
gives
\begin{align}
	& \mathcal{F}^{(1)}_2(a, b_1, b_2; c_1, c_2; t_1, t_2, k_1, k_2, x, y)\nonumber\\
	& =\Gamma \left(\begin{array}{c}
		c_1, c_2\\
		b_1, b_2, c_1 - b_1, c_2 - b_2
	\end{array}\right) \int_{0}^{1} \int_{0}^{1} u^{b_1 - 1} v^{b_2 - 1} (1-u)^{c_1 -  b_1 - 1} (1 - v)^{c_2 - b_2 - 1} \nonumber\\
	& \quad \times F_{{0}:{0}; {0}} ^{{1}:{k_1}; {k_2}}\left(\begin{array}{ccc}
		a: & \frac{- t_1}{k_1}, \dots, \frac{- t_1 + k_1 - 1}{k_1} ; & \frac{- t_2}{k_2}, \dots, \frac{- t_2 + k_2 - 1}{k_2}\\
		-: & - ; & - 
	\end{array}; (-k_1)^{k_1} \, u x,  (-k_2)^{k_2} \, v y\right) du \, dv. 
\end{align}
\end{proof}
Further, we can obtain integral forms for discrete Appell function $\mathcal{F}^{(1)}_2$ using the integral representation of gamma function, given by \cite{edr}
\begin{align}
	\Gamma (w) = \int_{0}^{\infty} e^{-u} \, u^{w - 1} \, du, \quad \Re(w) > 0.
\end{align}
 We list such integrals here for discrete Appell function $\mathcal{F}^{(1)}_2$, since proofs are straightforward we omit them. 
\begin{align}
& \mathcal{F}^{(1)}_2(a, b_1, b_2; c_1, c_2; t_1, t_2, k_1, k_2, x, y)\nonumber\\
& = \frac{1}{\Gamma (a)} \int_{0}^{\infty} e^{-u} \, u^{a - 1} \nonumber\\
& \quad \times F_{{0}:{1}; {1}} ^{{0}:{k_1 + 1}; {k_2 + 1}}\left(\begin{array}{ccc}
	- : & b_1, \frac{- t_1}{k_1}, \dots, \frac{- t_1 + k_1 - 1}{k_1} ; & b_2, \frac{- t_2}{k_2}, \dots, \frac{- t_2 + k_2 - 1}{k_2}\\
-	: & c_1 ; & c_2
\end{array}; (-k_1)^{k_1} \, u x,  (-k_2)^{k_2} \, u y\right) du;\label{3.9}\\
& = \frac{1}{\Gamma (b_1)} \int_{0}^{\infty} e^{-u} \, u^{b_1 - 1} \nonumber\\
& \quad \times F_{{0}:{1}; {1}} ^{{1}:{k_1}; {k_2 + 1}}\left(\begin{array}{ccc}
	a : & \frac{- t_1}{k_1}, \dots, \frac{- t_1 + k_1 - 1}{k_1} ; & b_2, \frac{- t_2}{k_2}, \dots, \frac{- t_2 + k_2 - 1}{k_2}\\
	-: & c_1 ; & c_2 
\end{array}; (-k_1)^{k_1} \, u x,  (-k_2)^{k_2} \, y\right) du;\\
& = \frac{1}{\Gamma (b_2)} \int_{0}^{\infty} e^{-v} \, v^{b_2 - 1} \nonumber\\
& \quad \times F_{{0}:{1}; {1}} ^{{1}:{k_1 + 1}; {k_2}}\left(\begin{array}{ccc}
	a : & b_1, \frac{- t_1}{k_1}, \dots, \frac{- t_1 + k_1 - 1}{k_1} ; & \frac{- t_2}{k_2}, \dots, \frac{- t_2 + k_2 - 1}{k_2}\\
	-: & c_1 ; & c_2 
\end{array}; (-k_1)^{k_1} \, x,  (-k_2)^{k_2} \, v y\right) dv;\\
& = \frac{1}{\Gamma (-t_1)} \int_{0}^{\infty} e^{-u} \, u^{-t_1 - 1} \nonumber\\
& \quad \times F_{{0}:{1}; {1}} ^{{1}:{1}; {k_2 + 1}}\left(\begin{array}{ccc}
a : & b_1 ; & b_2, \frac{- t_2}{k_2}, \dots, \frac{- t_2 + k_2 - 1}{k_2}\\
-: & c_1 ; & c_2 
\end{array}; (- u)^{k_1} \, x,  (-k_2)^{k_2} \, y\right) du;\\
& = \frac{1}{\Gamma (-t_2)} \int_{0}^{\infty} e^{-v} \, v^{-t_2 - 1} \nonumber\\
& \quad \times F_{{0}:{1}; {1}} ^{{1}:{k_1 + 1}; {1}}\left(\begin{array}{ccc}
a : & b_1, \frac{- t_1}{k_1}, \dots, \frac{- t_1 + k_1 - 1}{k_1} ; & b_2\\
-: & c_1 ; & c_2 
\end{array}; (-k_1)^{k_1} \, x,  (-v)^{k_2} \, y\right) dv.
\end{align}
We denote the first discrete version of Humbert functions $\psi_1$, $\psi_2$, \cite{ph, sk}, by $\psi^{(1)}_1$, $\psi^{(1)}_2$ and defined as:
\begin{align}
	&\psi^{(1)}_1 \left(a, b_1; c_1, c_2; t_1, t_2, k_1, k_2, x,  y\right) \nonumber\\
	& = \sum_{m,n \geq 0} \frac{(a)_{m+n} \, (b_1)_m  \, (-1)^{m k_1} \, (-t_1)_{mk_1} \, (-1)^{n k_2} \, (-t_2)_{nk_2}}{ (c_1)_{m} \, (c_2)_n \, m! \, n!} \ x^m \, y^n\nonumber
	\\[5pt]
&	\psi^{(1)}_2 \left( a; c_1, c_2; t_1, t_2, k_1, k_2, x,  y\right)\nonumber\\
 & = \sum_{m,n \geq 0} \frac{(a)_{m+n} \, (-1)^{m k_1} \, (-t_1)_{mk_1} \, (-1)^{n k_2} \, (-t_2)_{nk_2}}{ (c_1)_{m} \, (c_2)_n \, m! \, n!} \ x^m \, y^n.
\end{align}
One can verify that the limiting cases of discrete Appell function $\mathcal{F}^{(1)}_2$ give the discrete Humbert functions $\psi^{(1)}_1$ and $\psi^{(1)}_2$ as
\begin{align}
	& \lim_{\varepsilon \to 0}  \mathcal{F}^{(1)}_2 \left(a, b_1, \frac{1}{\varepsilon}; c_1, c_2; t_1, t_2, k_1, k_2, x, \varepsilon \, y\right) \nonumber\\
	& = \lim_{\varepsilon \to 0} \sum_{m,n \geq 0} \frac{(a)_{m+n} \, (b_1)_m \, \left(\frac{1}{\varepsilon}\right)_n \, (-1)^{m k_1} \, (-t_1)_{mk_1} \, (-1)^{n k_2} \, (-t_2)_{nk_2}}{ (c_1)_{m} \, (c_2)_n \, m! \, n!} \ x^m \, (\varepsilon \, y)^n\nonumber\\
	& = 	 \sum_{m,n \geq 0} \frac{(a)_{m+n} \, (b_1)_m \,  \, (-1)^{m k_1} \, (-t_1)_{mk_1} \, (-1)^{n k_2} \, (-t_2)_{nk_2}}{ (c_1)_{m} \, (c_2)_n \, m! \, n!} \ x^m \, y^n \, \lim_{\varepsilon \to 0} \varepsilon^n \left(\frac{1}{\varepsilon}\right)_n\nonumber\\
	& = 	 \sum_{m,n \geq 0} \frac{(a)_{m+n} \, (b_1)_m   \, (-1)^{m k_1} \, (-t_1)_{mk_1} \, (-1)^{n k_2} \, (-t_2)_{nk_2}}{ (c_1)_{m} \, (c_2)_n \, m! \, n!} \ x^m \, y^n\nonumber\\
	& = \psi^{(1)}_1 \left(a, b_1; c_1, c_2; t_1, t_2, k_1, k_2, x,  y\right).
\end{align}
Similarly
\begin{align}
	\lim_{\varepsilon \to 0}  \mathcal{F}^{(1)}_2 \left(a, \frac{1}{\varepsilon}, \frac{1}{\varepsilon} ; c_1, c_2; t_1, t_2, k_1, k_2, \varepsilon \, x, \varepsilon \, y\right) = \psi^{(1)}_2 \left( a; c_1, c_2; t_1, t_2, k_1, k_2, x,  y\right).
\end{align} 
\section{Differential and difference formulae for $\mathcal{F}^{(1)}_2$}
Let $\Delta_{t} \, f(t) = f(t + 1) - f(t)$ be the difference operator such that $\Delta_{t} [(-1)^k \, (-t)_k] = k \, (-1)^{k - 1} \, (-t)_{k - 1}$ and $\theta = x \frac{\partial}{\partial x}, \phi = y \frac{\partial}{\partial y}$ be the differential operators. Then,  we have the following 

\begin{theorem}
Following difference and differential formulae are satisfied by discrete Appell function $\mathcal{F}^{(1)}_2$:
\begin{align}
	&	(\Delta_{t_1})^r \mathcal{F}^{(1)}_2(a, b_1, b_2; c_1, c_2; t_1, t_2, 1, k_2, x, y) \nonumber\\
	& = \frac{(a)_r \, (b_1)_r \,  x^r }{(c_1)_r}  \mathcal{F}^{(1)}_2(a + r, b_1 + r, b_2; c_1 + r, c_2; t_1, t_2, 1, k_2, x, y);\label{4.1}\\
	& (\Delta_{t_2})^r \mathcal{F}^{(1)}_2(a, b_1, b_2; c_1, c_2; t_1, t_2, k_1, 1, x, y) \nonumber\\
	& = \frac{(a)_r \, (b_2)_r \,  y^r}{(c_2)_r}   \mathcal{F}^{(1)}_2(a + r, b_1, b_2 + r;c_1, c_2 + r; t_1, t_2, k_1, 1, x, y);\label{e32}\\
	%& (\Theta_{t_1})^r \mathcal{F}^{(1)}_2(a, b_1, b_2; c_1, c_2; t_1, t_2, k, x, y) \nonumber\\
	%& = \frac{(-1)^{rk} \, (a)_r \, (b_1)_r \, (-t_1)_{rk} \, x^r \, k^r}{(c_1)_r} \, \mathcal{F}^{(1)}_2(a + r, b_1 + r, b_2; c_1 + r, c_2; t_1 - rk, t_2, k, x, y);\\
	%& (\Theta_{t_2})^r \mathcal{F}^{(1)}_2(a, b_1, b_2; c_1, c_2; t_1, t_2, k, x, y) \nonumber\\
	%& = \frac{(-1)^{rk} \, (a)_r \, (b_2)_r \, (-t_2)_{rk} \, y^r \, k^r}{(c_2)_r} \, \mathcal{F}^{(1)}_2(a + r, b_1, b_2 + r; c_1, c_2 + r; t_1, t_2 - rk, k, x, y);\\
	& (\theta)^r \mathcal{F}^{(1)}_2(a, b_1, b_2; c_1, c_2; t_1, t_2, k_1, k_2, x, y) \nonumber\\
	& = \frac{(-1)^{rk_1} \, (a)_r \, (b_1)_r \, (-t_1)_{rk_1} \, x^r}{(c_1)_r} \nonumber\\
	& \quad \times  \mathcal{F}^{(1)}_2(a + r, b_1 + r, b_2; c_1 + r, c_2; t_1 - rk_1, t_2, k_1, k_2, x, y);\\
	& (\phi)^r \mathcal{F}^{(1)}_2(a, b_1, b_2; c_1, c_2; t_1, t_2, k_1, k_2, x, y) \nonumber\\
	& = \frac{(-1)^{rk_2} \, (a)_r \, (b_2)_r \, (-t_2)_{rk_2} \, y^r}{(c_2)_r} \nonumber\\
	& \quad \times  \mathcal{F}^{(1)}_2(a + r, b_1, b_2 + r; c_1, c_2 + r; t_1, t_2 - rk_2, k_1, k_2, x, y);\label{e34}\\
	& \left(\frac{\partial}{\partial x}\right)^r \left[x^{b_1 + r - 1} \mathcal{F}^{(1)}_2(a, b_1, b_2; c_1, c_2; t_1, t_2, k_1, k_2, x, y)\right]\nonumber\\
	& = x^{b_1 - 1} \, (b_1)_r \mathcal{F}^{(1)}_2(a, b_1 + r, b_2; c_1, c_2; t_1, t_2, k_1, k_2, x, y);\label{4.14}\\
	& \left(\frac{\partial}{\partial y}\right)^r [y^{b_2 + r - 1} \mathcal{F}^{(1)}_2(a, b_1, b_2; c_1, c_2; t_1, t_2,  k_1, k_2, x, y)]\nonumber\\
	& = y^{b_2 - 1} \, (b_2)_r \mathcal{F}^{(1)}_2 (a, b_1, b_2 + r; c_1, c_2; t_1, t_2,  k_1, k_2, x, y);\label{e3.6}\\
	& \left(\frac{\partial}{\partial x}\right)^r [x^{a + r - 1} \mathcal{F}^{(1)}_2(a, b_1, b_2; c_1, c_2; t_1, t_2, k_1, k_2, x, xy)]\nonumber\\
	& = x^{a - 1} \, (a)_r \mathcal{F}^{(1)}_2 (a + r, b_1, b_2; c_1, c_2; t_1, t_2,  k_1, k_2, x, xy);\\
	& \left(\frac{\partial}{\partial y}\right)^r [y^{a + r - 1} \mathcal{F}^{(1)}_2 (a, b_1, b_2; c_1, c_2; t_1, t_2,  k_1, k_2, xy, y)]\nonumber\\
	& = y^{a - 1} \, (a)_r \mathcal{F}^{(1)}_2(a + r, b_1, b_2; c_1, c_2; t_1, t_2,  k_1, k_2, xy, y);\\
	& \left(\frac{\partial}{\partial x}\right)^r [x^{c_1 - 1} \mathcal{F}^{(1)}_2(a, b_1, b_2; c_1, c_2; t_1, t_2,  k_1, k_2, x, y)]\nonumber\\
	& = (-1)^r \, (1 - c_1)_r \, x^{c_1 - r - 1} \mathcal{F}^{(1)}_2(a, b_1, b_2; c_1 - r, c_2; t_1, t_2,  k_1, k_2, x, y);\\
	& \left(\frac{\partial}{\partial y}\right)^r [y^{c_2 - 1} \mathcal{F}^{(1)}_2(a, b_1, b_2; c_1, c_2; t_1, t_2,  k_1, k_2, x, y)]\nonumber\\
	& = (-1)^r \, y^{c_2 - r - 1} \, (1 - c_2)_r \mathcal{F}^{(1)}_2(a, b_1, b_2; c_1, c_2 - r; t_1, t_2,  k_1, k_2, x, y).\label{e3.10}
\end{align}
\end{theorem}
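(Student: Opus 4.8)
The plan is to prove all ten identities by a single uniform mechanism: apply the indicated operator directly to the defining double series \eqref{3.1}, carry the operation inside the summation (legitimate because $\mathcal{F}^{(1)}_2$ converges absolutely on $\lvert x\rvert+\lvert y\rvert<1$, as established above), simplify the resulting general term with elementary Pochhammer identities, reindex the summation, and recognize the outcome as a parameter-shifted copy of $\mathcal{F}^{(1)}_2$. In each case I would first settle $r=1$ and then obtain general $r$ by induction, since every right-hand side has the self-similar shape (an $r$-dependent prefactor) $\times\,\mathcal{F}^{(1)}_2$(shifted parameters), so that one further application of the operator to the shifted function reproduces the pattern with $r\mapsto r+1$.

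I would dispatch the pure differential formulas \eqref{4.14}, \eqref{e3.6} and \eqref{e3.10} first, as they are the most direct. Applying $(\partial_x)^r x^{s}=s(s-1)\cdots(s-r+1)\,x^{s-r}$ to each monomial $x^{m+b_1+r-1}$ produces the factor $(m+b_1)_r$, and the absorption identity $(b_1)_m(m+b_1)_r=(b_1)_{m+r}=(b_1)_r(b_1+r)_m$ converts the series into $x^{b_1-1}(b_1)_r$ times $\mathcal{F}^{(1)}_2$ with $b_1\mapsto b_1+r$, giving \eqref{4.14}; formula \eqref{e3.6} is identical in $y$. The parameter-decreasing formula \eqref{e3.10} rests on the single identity $(c_1-r)_{m+r}=(c_1-r)_r(c_1)_m$, equivalently $\prod_{j=1}^{r}(c_1-j+m)/(c_1)_m=(c_1-r)_r/(c_1-r)_m$, together with $(c_1-r)_r=(-1)^r(1-c_1)_r$. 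The two formulas with the argument $xy$ are the same computation applied to the combined power $x^{m+n+a+r-1}$, using $(a)_{m+n+r}=(a)_r(a+r)_{m+n}$ to shift $a\mapsto a+r$.

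Next I would treat the difference formulas \eqref{4.1} and \eqref{e32}, which are restricted to $k_1=1$ (resp. $k_2=1$). For $k_1=1$ the only $t_1$-dependent factor in the series is $(-1)^m(-t_1)_m$, so $\Delta_{t_1}$ acts solely through the given rule $\Delta_{t_1}[(-1)^m(-t_1)_m]=m\,(-1)^{m-1}(-t_1)_{m-1}$. The factor $m$ annihilates the $m=0$ term; shifting $m\mapsto m+1$ and peeling $a$, $b_1$, $c_1$ off the Pochhammer symbols via $(a)_{m+n+1}=a\,(a+1)_{m+n}$, $(b_1)_{m+1}=b_1(b_1+1)_m$, $(c_1)_{m+1}=c_1(c_1+1)_m$ (the surplus $(m+1)$ cancelling against $(m+1)!$) yields $\tfrac{ab_1x}{c_1}\,\mathcal{F}^{(1)}_2(a+1,b_1+1,b_2;c_1+1,c_2;\ldots)$, the $r=1$ instance, after which induction closes the argument.

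The $\theta^r$ and $\phi^r$ formulas are where I expect the main obstacle to lie. Here $\theta=x\,\partial_x$ multiplies the general term by $m$; again $m=0$ drops and $m\mapsto m+1$, but now the factor $(-t_1)_{(m+1)k_1}$ must be handled for arbitrary $k_1$. The key step is the Pochhammer splitting $(-t_1)_{(m+1)k_1}=(-t_1)_{k_1}\,(-t_1+k_1)_{mk_1}$ combined with the observation $-t_1+k_1=-(t_1-k_1)$, so that $(-t_1+k_1)_{mk_1}=(-(t_1-k_1))_{mk_1}$ is precisely the $t_1$-factor of $\mathcal{F}^{(1)}_2$ with $t_1\mapsto t_1-k_1$; together with $(-1)^{(m+1)k_1}=(-1)^{k_1}(-1)^{mk_1}$ and the same peeling of $a,b_1,c_1$, one application produces the prefactor $\tfrac{(-1)^{k_1}(-t_1)_{k_1}(a)_1(b_1)_1x}{(c_1)_1}$ and the shift $t_1\mapsto t_1-k_1$. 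The delicate part is the induction step: the operator must be reapplied to the \emph{already shifted} function, so the accumulated prefactors multiply as $(-t_1)_{k_1}(-t_1+k_1)_{k_1}\cdots(-t_1+(r-1)k_1)_{k_1}$, and verifying that these telescope—again by repeated Pochhammer splitting—to $(-t_1)_{rk_1}$, while the successive shifts compose to $t_1\mapsto t_1-rk_1$, is the one genuinely bookkeeping-heavy point of the theorem; the remaining factors collapse to $(-1)^{rk_1}(a)_r(b_1)_r\,x^r/(c_1)_r$ exactly as claimed.
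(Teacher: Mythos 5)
Your overall strategy---push the operator through the absolutely convergent double series, reindex, absorb factors by Pochhammer identities, and induct on $r$---is exactly the paper's own approach (the paper writes out only \eqref{4.1} and \eqref{4.14} in detail and declares everything else ``similar''). Your treatment of the difference formulas \eqref{4.1}, \eqref{e32} coincides with the paper's, and the induction there is legitimate because the prefactor $\frac{(a)_p(b_1)_p x^p}{(c_1)_p}$ contains no $t_1$, so $\Delta_{t_1}$ commutes with it. Your handling of the pure-derivative formulas \eqref{4.14}--\eqref{e3.10} via $(\partial/\partial x)^r x^s = s(s-1)\cdots(s-r+1)\,x^{s-r}$, the absorption $(b_1)_m(b_1+m)_r=(b_1)_r(b_1+r)_m$, and $(c_1-r)_r=(-1)^r(1-c_1)_r$ is correct and in fact cleaner than the paper's induction.

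There is, however, a genuine gap in your argument for the $\theta^r$ and $\phi^r$ formulas, and it sits precisely at the step you flag as ``delicate.'' Your induction reapplies $\theta$ to the inductive hypothesis $\theta^r\mathcal{F}^{(1)}_2 = C_r\,x^r\,\mathcal{F}_r$ (with $C_r$ constant and $\mathcal{F}_r$ the shifted function), but $\theta=x\,\partial/\partial x$ does not commute with multiplication by $x^r$: one has $\theta\,(x^r g) = x^r(r+\theta)\,g$, so the extra term $r\,C_r\,x^r\,\mathcal{F}_r$ survives and the claimed self-similar pattern breaks. The failure is not repairable by better bookkeeping of the prefactors (your telescoping $(-t_1)_{k_1}(-t_1+k_1)_{k_1}\cdots(-t_1+(r-1)k_1)_{k_1}=(-t_1)_{rk_1}$ is itself correct): on monomials, $(x\,\partial/\partial x)^r x^m = m^r x^m$, whereas the right-hand side of the stated identity, reindexed as you describe, produces the falling factorial $m(m-1)\cdots(m-r+1)$; already for $r=2$ these disagree ($m^2\neq m(m-1)$), so under the literal reading $(\theta)^r=(x\,\partial/\partial x)^r$ the identity you are trying to prove is false for $r\geq 2$. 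The way to a correct proof is to read $(\theta)^r$ as $x^r\,(\partial/\partial x)^r=\theta(\theta-1)\cdots(\theta-r+1)$ and to restructure the induction accordingly: first prove
\begin{align}
\left(\frac{\partial}{\partial x}\right)^{r}\mathcal{F}^{(1)}_2
= \frac{(-1)^{rk_1}\,(a)_r\,(b_1)_r\,(-t_1)_{rk_1}}{(c_1)_r}\,
\mathcal{F}^{(1)}_2(a+r,b_1+r,b_2;c_1+r,c_2;t_1-rk_1,t_2,k_1,k_2,x,y),\nonumber
\end{align}
where the induction closes cleanly because the prefactor is independent of $x$ and therefore passes through $\partial/\partial x$, and only then multiply both sides by $x^r$. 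With that reading, your $r=1$ computation, the splitting $(-t_1)_{(m+1)k_1}=(-t_1)_{k_1}\,(-(t_1-k_1))_{mk_1}$, and your telescoping argument all go through verbatim.
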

\begin{proof}
 The proofs of these differential formulae are very elementary and hence we give only the proof of \eqref{4.1}.  The action of difference operator $\Delta_{t_1}$ on the discrete function $\mathcal{F}^{(1)}_2$ yields
 \begin{align}
 &(\Delta_{t_1}) \mathcal{F}^{(1)}_2(a, b_1, b_2; c_1, c_2; t_1, t_2, 1, k_2, x, y)\nonumber\\
& = \sum_{m,n \geq 0} \frac{(a)_{m+n} \, (b_1)_m \, (b_2)_n \, (-1)^{m - 1} \, m \, (-t_1)_{m - 1} \, (-1)^{nk_2} (-t_2)_{nk_2}}{ (c_1)_{m} \, (c_2)_n \, m! \, n!} \ x^m \, y^n\nonumber\\
%& =  \sum_{m \ge 1, n \geq 0} \frac{(a)_{m+n} \, (b_1)_m \, (b_2)_n \, (-1)^{(m + n) k - 1} \, (-t_1)_{mk - 1} \, (-t_2)_{nk}}{(c_1)_{m} \, (c_2)_n \, (m - 1)! \, n!} \ x^m \, y^n\nonumber\\
& = \sum_{m, n \geq 0} \frac{(a)_{m+n + 1} \, (b_1)_{m + 1} \, (b_2)_n \, (-1)^{m}  \, (-t_1)_{m} \, (-1)^{n k_2} (-t_2)_{nk_2}}{(c_1)_{m + 1} \, (c_2)_n \, m! \, n!} \ x^{m + 1} \, y^n\nonumber\\
%& =   \frac{a \, b_1 \, x}{c_1}\nonumber\\
%& \quad \times \sum_{m,n \geq 0} \frac{(a + 1)_{m+n} \, (b_1 + 1)_m \, (b_2)_n \, (-1)^{m} \, (-t_1 )_{mk} \, (-t_2)_{nk}}{(c_1 + 1)_{m} \, (c_2)_n \, m! \, n!} \ x^m \, y^n\nonumber\\
& =  \frac{a \, b_1 \, x}{c} \mathcal{F}^{(1)}_2 (a + 1, b_1 + 1, b_2; c_1 + 1, c_2; t_1, t_2, 1, k_2, x, y). 
 \end{align}
The result is true for $r = 1$. We can assume that this is also true for $r = p$ and using this it can be proved for $r = p + 1$. Thus, we get the required result. Similarly, we can show that the formula \eqref{e32}-\eqref{e34} hold true. To prove \eqref{4.14}, we start with 
\begin{align}
	& \frac{\partial}{\partial x} \left[x^{b_1} \mathcal{F}^{(1)}_2(a, b_1, b_2; c_1, c_2; t_1, t_2, k, x, y)\right]\nonumber\\
	& = \sum_{m, n \geq 0} \frac{(a)_{m+n} \, (b_1)_m \, (b_2)_n \, (-1)^{(m + n) k} \, (-t_1)_{mk} \, (-t_2)_{nk}}{ (c_1)_{m} \, (c_2)_n \, m! \, n!} \, \frac{\partial}{\partial x} x^{b_1 + m} \, y^n\nonumber\\
	& = \sum_{m, n \geq 0} \frac{(a)_{m+n} \, (b_1)_m \, (b_2)_n \, (-1)^{(m + n) k} \, (-t_1)_{mk} \, (-t_2)_{nk}}{ (c_1)_{m} \, (c_2)_n \, m! \, n!} \, (b_1 + m) \, x^{b_1 + m - 1} \, y^n\nonumber\\
	& = x^{b_1 - 1} \, b_1 \sum_{m, n \geq 0} \frac{(a)_{m+n} \, (b_1 + 1)_m \, (b_2)_n \, (-1)^{(m + n) k} \, (-t_1)_{mk} \, (-t_2)_{nk}}{ (c_1)_{m} \, (c_2)_n \, m! \, n!} \,  x^{m} \, y^n\nonumber\\
	& = x^{b_1 - 1} \, b_1 \mathcal{F}^{(1)}_2(a, b_1 + 1, b_2; c_1, c_2; t_1, t_2, k, x, y). 
\end{align}
So \eqref{4.14} is true for $r = 1$. Consider the result to be true for $r = p$ leads the validity of results for $r = p + 1$. Hence, we can conclude that \eqref{4.14} is true for each natural number $r$. Similarly, the results \eqref{e3.6}-\eqref{e3.10} can be proved.
\end{proof}
\section{Finite and infinite summation formulas}
In this section, we establish some finite and infinite sums in terms of discrete Appell function $\mathcal{F}^{(1)}_2$ in the following theorem.
\begin{theorem}
Following summation formulas hold:
	\begin{align}
& \mathcal{F}^{(1)}_2(a, b_1 + r, b_2; c_1, c_2; t_1, t_2, k_1, k_2, x, y)\nonumber\\
& = \sum_{s = 0}^{r} {r \choose s} \frac{(a)_s \, (-1)^{sk_1} \, (-t_1)_{sk_1}}{(c_1)_s} \, x^s \nonumber\\
& \quad \times  \mathcal{F}^{(1)}_2(a + s, b_1 + s, b_2; c_1 + s, c_2; t_1 - sk_1, t_2, k_1, k_2, x, y);\label{e5.1}\\
& \mathcal{F}^{(1)}_2(a, b_1, b_2 + r; c_1, c_2; t_1, t_2, k_1, k_2, x, y)\nonumber\\
& = \sum_{s = 0}^{r} {r \choose s} \frac{(a)_s \, (-1)^{sk_2} \, (-t_2)_{sk_2}}{(c_2)_s} \, y^s \nonumber\\
& \quad \times  \mathcal{F}^{(1)}_1(a + s, b_1, b_2 + s; c_1, c_2 + s; t_1, t_2 - sk_2, k_1, k_2, x, y);\label{e5.2}\\
&\sum_{r = 0}^{\infty} \frac{(a)_r}{r !} \, z^r \, \mathcal{F}^{(1)}_2 (a + r, b_1, b_2; c_1, c_2 ; t_1, t_2, k_1, k_2, x, y)\nonumber\\
& = (1 - z)^{-a} \, \mathcal{F}^{(1)}_2 \left(a, b_1, b_2; c_1, c_2; t_1, t_2, k_1, k_2, \frac{x}{1 - z}, \frac{y}{1 - z}\right);\label{e43}\\
&\sum_{r = 0}^{\infty} \frac{(b_1)_r}{r !} \, z^r \, \mathcal{F}^{(1)}_2 (a, b_1 + r, b_2; c_1, c_2 ; t_1, t_2, k_1, k_2, x, y)\nonumber\\
& = (1 - z)^{-b_1} \, \mathcal{F}^{(1)}_2 \left(a, b_1, b_2; c_1, c_2; t_1, t_2, k_1, k_2, \frac{x}{1 - z}, y\right);\\
&\sum_{r = 0}^{\infty} \frac{(b_2)_r}{r !} \, z^r \, \mathcal{F}^{(1)}_2 (a, b_1, b_2 + r; c_1, c_2 ; t_1, t_2, k_1, k_2, x, y)\nonumber\\
& = (1 - z)^{-b_2} \, \mathcal{F}^{(1)}_2 \left(a, b_1, b_2; c_1, c_2; t_1, t_2, k_1, k_2, x, \frac{y}{1 - z}\right).
 	\end{align}
\end{theorem}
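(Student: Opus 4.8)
The plan is to establish all five identities by substituting the defining double series \eqref{3.1} and rearranging the Pochhammer symbols and sign factors; the three infinite sums then collapse under the binomial theorem $\sum_{r\ge 0}(\alpha)_r z^r/r!=(1-z)^{-\alpha}$, while the two finite sums reduce to a single combinatorial identity for Pochhammer symbols.

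For the infinite sums I would treat \eqref{e43} as the model case. Expanding the left side with \eqref{3.1}, the dependence on $r$ sits only in the factor $(a+r)_{m+n}$ and in $z^r$. Using $(a)_r\,(a+r)_{m+n}=(a)_{m+n}\,(a+m+n)_r$, I would pull $(a)_{m+n}$ out, interchange the $r$- and $(m,n)$-summations, and collapse the inner $r$-series to $(1-z)^{-(a+m+n)}$ by the binomial theorem. Factoring $(1-z)^{-a}$ in front and absorbing $(1-z)^{-m}$ and $(1-z)^{-n}$ into $x^m$ and $y^n$ reproduces $\mathcal{F}^{(1)}_2$ at the rescaled arguments $x/(1-z)$, $y/(1-z)$, giving the right side. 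The two remaining infinite sums are identical except that I would use $(b_1)_r\,(b_1+r)_m=(b_1)_m\,(b_1+m)_r$ (respectively the $b_2$, $n$ version), so that only the $x$-variable (respectively the $y$-variable) is rescaled, matching the stated right-hand sides. The interchange of summations is legitimate by the absolute convergence established in Section~2, valid on the common domain $|x|+|y|<|1-z|$ with $|z|<1$.

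For the finite sums \eqref{e5.1} and \eqref{e5.2} I would start from the right-hand side. Writing the inner $\mathcal{F}^{(1)}_2(a+s,b_1+s,b_2;c_1+s,c_2;t_1-sk_1,t_2,\dots)$ as a double series in indices $(m',n)$ and setting $m=m'+s$ so that the power of $x$ becomes $x^m$, the $s$-sum turns into coefficient extraction for fixed $(m,n)$. The load-bearing simplifications are the telescoping products $(a)_s(a+s)_{m-s+n}=(a)_{m+n}$, $(-t_1)_{sk_1}(-t_1+sk_1)_{(m-s)k_1}=(-t_1)_{mk_1}$, $(c_1)_s(c_1+s)_{m-s}=(c_1)_m$, together with $(-1)^{sk_1}(-1)^{(m-s)k_1}=(-1)^{mk_1}$. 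After these cancellations every $s$-dependent hypergeometric factor vanishes and the claim reduces to the purely combinatorial identity
\[
\frac{(b_1+r)_m}{m!}=\sum_{s=0}^{\min(r,m)}\binom{r}{s}\,\frac{(b_1+s)_{m-s}}{(m-s)!},
\]
which I would verify by comparing coefficients of $t^m$ in $\sum_{s=0}^{r}\binom{r}{s}t^s(1-t)^{-(b_1+s)}=(1-t)^{-b_1}\bigl(1+t/(1-t)\bigr)^r=(1-t)^{-(b_1+r)}$, i.e.\ once more the binomial theorem. The second finite sum \eqref{e5.2} follows by interchanging the roles of $(b_1,c_1,t_1,k_1,x,m)$ and $(b_2,c_2,t_2,k_2,y,n)$.

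The routine part throughout is the bookkeeping of Pochhammer and sign factors. The genuinely load-bearing steps are two: justifying the interchange of summation in the infinite-sum cases via absolute convergence on $|x|+|y|<|1-z|$, so that $x/(1-z)$ and $y/(1-z)$ remain inside the region of convergence found in Section~2; and the Pochhammer binomial identity displayed above, which is the heart of both finite sums.
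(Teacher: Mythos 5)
Your proposal is correct, and for the three infinite sums it is essentially the paper's own argument: the factorization $(a)_{m+n+r}=(a)_{m+n}(a+m+n)_r=(a)_r(a+r)_{m+n}$ plus the binomial theorem, merely run from the left-hand side instead of from the right-hand side as the paper does. (Your attention to the interchange of summation is a point the paper skips entirely; note only that joint absolute convergence of the triple series actually requires $\vert x\vert+\vert y\vert<1-\vert z\vert$, the stated region $\vert x\vert+\vert y\vert<\vert 1-z\vert$ then being reached by analytic continuation in $z$.)

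For the two finite sums \eqref{e5.1}--\eqref{e5.2}, however, your route is genuinely different from the paper's. The paper never manipulates the double series directly: it applies the Leibniz rule to $\left(\frac{\partial}{\partial x}\right)^r\left[x^{b_1+r-1}\,\mathcal{F}^{(1)}_2\right]$, evaluates each Leibniz term using the elementary derivatives of the power $x^{b_1+r-1}$ together with the derivative formula for $\left(\frac{\partial}{\partial x}\right)^s\mathcal{F}^{(1)}_2$ (the parameter-shifting companion of the $\theta^r$ formula), and then equates the outcome with formula \eqref{4.14} for the very same expression; the binomial coefficients enter through Leibniz, not through any Pochhammer identity. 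Your proof instead expands the right-hand side of \eqref{e5.1}, telescopes the factors via $(a)_s(a+s)_{m'+n}=(a)_{m+n}$, $(c_1)_s(c_1+s)_{m'}=(c_1)_m$, $(-t_1)_{sk_1}(-t_1+sk_1)_{m'k_1}=(-t_1)_{mk_1}$ and $(-1)^{sk_1}(-1)^{m'k_1}=(-1)^{mk_1}$, and reduces the whole theorem to the single identity
\[
\frac{(b_1+r)_m}{m!}=\sum_{s=0}^{\min(r,m)}\binom{r}{s}\,\frac{(b_1+s)_{m-s}}{(m-s)!},
\]
which your generating-function computation correctly verifies. What the paper's approach buys is economy and thematic coherence: once the differential formulas of Section 3 are available, the proof is a few lines and stays inside the operational calculus the paper is building. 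What yours buys is self-containedness and transparency: it requires no differentiation under the summation sign and no prior formulas, it isolates the combinatorial identity that actually makes the theorem true, and it manifestly transfers verbatim to $\mathcal{F}^{(2)}_2$ and to the second finite sum by the symmetry $(b_1,c_1,t_1,k_1,x,m)\leftrightarrow(b_2,c_2,t_2,k_2,y,n)$.
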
  
\begin{proof}
To prove \eqref{e5.1}, we start by applying the Leibnitz rule and get
\begin{align}
	& \left(\frac{\partial}{\partial x}\right)^r \left[x^{b_1 + r - 1} \, \mathcal{F}^{(1)}_2(a, b_1, b_2; c_1, c_2; t_1, t_2, k_1, k_2, x, y)\right]\nonumber\\
	& = \sum_{s = 0}^{r} {r \choose s} \left[\left(\frac{\partial}{\partial x}\right)^{r - s} \, x^{b_1 + r - 1}\right] \, \left[\left(\frac{\partial}{\partial x}\right)^s  \, \mathcal{F}^{(1)}_2(a, b_1, b_2; c_1, c_2; t_1, t_2, k_1, k_2, x, y)\right]\nonumber\\
	& = \sum_{s = 0}^{r} {r \choose s} (-1)^{r - s} \, (1 - b_1 + s)_{r - s} \, x^{b_1 + s - 1} \, \frac{(a)_s \, (b_1)_s \, (-1)^{sk_1} \, (-t_1)_{sk_1}}{(c_1)_s}\nonumber\\
	& \quad \times \mathcal{F}^{(1)}_2(a + s, b_1 + s, b_2; c_1 + s, c_2; t_1 - sk_1, t_2, k_1, k_2, x, y) \nonumber\\
	& = \sum_{s = 0}^{r} {r \choose s} \, (-1)^{r - s} \frac{(-1)^s \, (1 - b_1 - r)_r}{(b_1)_s} \, x^{b_1 + s - 1} \, \frac{(a)_s \, (b_1)_s \, (-1)^{sk_1} \, (-t_1)_{sk_1}}{(c_1)_s}\nonumber\\
	& \quad \times \mathcal{F}^{(1)}_2(a + s, b_1 + s, b_2; c_1 + s, c_2; t_1 - sk_1, t_2, k_1, k_2, x, y) \nonumber\\
	& = (b_1)_r \, \sum_{s = 0}^{r} {r \choose s} \,  x^{b_1 + s - 1} \, \frac{(a)_s  \, (-1)^{sk_1} \, (-t_1)_{sk_1}}{(c_1)_s}\nonumber\\
	& \quad \times \mathcal{F}^{(1)}_2(a + s, b_1 + s, b_2; c_1 + s, c_2; t_1 - sk_1, t_2, k_1, k_2, x, y).
\end{align}
Taking into account the equation \eqref{4.14}, we have
\begin{align}
&	x^{b_1 - 1} \, (b_1)_r \, \mathcal{F}^{(1)}_2 (a, b_1 + r, b_2; c_1, c_2; t_1, t_2, k_1, k_2, x, y)\nonumber\\
	& = (b_1)_r \, \sum_{s = 0}^{r} {r \choose s} \,  x^{b_1 + s - 1} \, \frac{(a)_s  \, (-1)^{sk_1} \, (-t_1)_{sk_1}}{(c_1)_s}\nonumber\\
	& \quad \times \mathcal{F}^{(1)}_2(a + s, b_1 + s, b_2; c_1 + s, c_2; t_1 - sk_1, t_2, k_1, k_2, x, y).
\end{align}
This implies
	\begin{align}
	& \mathcal{F}^{(1)}_2(a, b_1 + r, b_2; c_1, c_2; t_1, t_2, k_1, k_2, x, y)\nonumber\\
	& = \sum_{s = 0}^{r} {r \choose s} \frac{(a)_s \, (-1)^{sk_1} \, (-t_1)_{sk_1}}{(c_1)_s} \, x^s \nonumber\\
	& \quad \times  \mathcal{F}^{(1)}_2(a + s, b_1 + s, b_2; c_1 + s, c_2; t_1 - sk_1, t_2, k_1, k_2, x, y).
\end{align}
This completes the proof. A similar procedure can be used to get equation \eqref{e5.2}. To prove \eqref{e43}, we start with the right hand side of the equation and thus get
\begin{align}
	&(1 - z)^{-a} \, \mathcal{F}^{(1)}_2 \left(a, b_1, b_2; c_1, c_2; t_1, t_2, k_1, k_2, \frac{x}{1 - z}, \frac{y}{1 - z}\right) \nonumber\\
	& = \sum_{m,n \geq 0} (1 - z)^{- (a + m + n)} \, \frac{(a)_{m+n} \, (b_1)_m \, (b_2)_n \, (-1)^{m k_1} \, (-t_1)_{mk_1} \, (-1)^{n k_2} \, (-t_2)_{nk_2}}{ (c_1)_{m} \, (c_2)_n \, m! \, n!} \ x^m \, y^n.
\end{align}
Using the binomial theorem $(1 - z)^{- (a + m + n)} = \sum_{r = 0}^{\infty} \frac{(a + m + n)_r}{r !} \, z^r$ and identity $(a)_{m + n + r} = (a)_{m + n} \, (a + m + n)_r = (a)_r \, (a + r)_{m + n}$, we have
\begin{align}
	&(1 - z)^{-a} \, \mathcal{F}^{(1)}_2 \left(a, b_1, b_2; c_1, c_2; t_1, t_2, k_1, k_2, \frac{x}{1 - z}, \frac{y}{1 - z}\right) \nonumber\\
	& = \sum_{r = 0}^{\infty} \frac{(a)_r}{r !} \, z^r \, \mathcal{F}^{(1)}_2 (a + r, b_1, b_2; c_1, c_2 ; t_1, t_2, k, x, y).
\end{align}
It completes the proof of \eqref{e43}. A similar procedure can be used to prove remaining infinite summation formulas. 
\end{proof}
\section{Recursion formulae}
We have the following
\begin{theorem}
The following recursion formulas hold for the discrete Appell function $\mathcal{F}^{(1)}_2$:
\begin{align}
	& \mathcal{F}^{(1)}_2 (a + s, b_1, b_2; c_1, c_2 ; t_1, t_2, k_1, k_2, x, y) \nonumber\\
	& = \mathcal{F}^{(1)}_2 (a, b_1, b_2; c_1, c_2; t_1, t_2, k, x, y)\nonumber\\
	& \quad  + \frac{(-1)^k \, (-t_1)_k \, b_1 \, x}{c_1}  \sum_{r = 1}^{s} \mathcal{F}^{(1)}_2 (a + r, b_1 + 1, b_2; c_1 + 1, c_2; t_1 - k, t_2, k, x, y)\nonumber\\
	& \quad  + \frac{(-1)^k \, (-t_2)_k \, b_2 \, y}{c_2} \sum_{r = 1}^{s} \mathcal{F}^{(1)}_2 (a + r, b_1, b_2 + 1; c_1, c_2 + 1; t_1, t_2 - k, k, x, y);\label{e6.1}\\
	& \mathcal{F}^{(1)}_2 (a - s, b_1, b_2; c_1, c_2 ; t_1, t_2, k, x, y) \nonumber\\
	& = \mathcal{F}^{(1)}_2 (a, b_1, b_2; c_1, c_2 ; t_1, t_2, k, x, y)\nonumber\\
	& \quad  - \frac{(-1)^k \, (-t_1)_k \, b_1 \, x}{c_1} \sum_{r = 0}^{s - 1} \mathcal{F}^{(1)}_2 (a - r, b_1 + 1, b_2; c_1 + 1, c_2; t_1 - k, t_2, k, x, y)\nonumber\\
	& \quad  - \frac{(-1)^k \, (-t_2)_k \, b_2 \, y}{c_2} \sum_{r = 0}^{s - 1} \mathcal{F}^{(1)}_2 (a - r, b_1, b_2 + 1; c_1, c_2 + 1; t_1, t_2 - k, k, x, y);\\
	& \mathcal{F}^{(1)}_2 (a, b_1 + s, b_2; c_1, c_2 ; t_1, t_2, k, x, y) \nonumber\\
	& = \mathcal{F}^{(1)}_2 (a, b_1, b_2; c_1, c_2 ; t_1, t_2, k, x, y)\nonumber\\
	& \quad  + \frac{(-1)^k \, (-t_1)_k \, a \, x}{c_1}  \sum_{r = 1}^{s} \mathcal{F}^{(1)}_2 (a + 1, b_1 + r, b_2; c_1 + 1, c_2; t_1 - k, t_2, k, x, y);\\
	& \mathcal{F}^{(1)}_2 (a, b_1 - s, b_2; c_1, c_2 ; t_1, t_2, k, x, y) \nonumber\\
	& = \mathcal{F}^{(1)}_2 (a, b_1, b_2; c_1, c_2; t_1, t_2, k, x, y)\nonumber\\
	& \quad  - \frac{(-1)^k \, (-t_1)_k \, a \, x}{c_1}  \sum_{r = 0}^{s - 1} \mathcal{F}^{(1)}_2 (a + 1, b_1 - r, b_2; c_1 + 1, c_2; t_1 - k, t_2, k, x, y);\\
	& \mathcal{F}^{(1)}_2 (a, b_1, b_2; c_1 - s, c_2; t_1, t_2, k, x, y) \nonumber\\
	& = \mathcal{F}^{(1)}_2 (a, b_1, b_2; c_1, c_2 ; t_1, t_2, k, x, y)\nonumber\\
	& \quad  + (-1)^k \, (-t_1)_k \, a \, b_1 \, x  \sum_{r = 1}^{s} \frac{\mathcal{F}^{(1)}_2 (a + 1, b_1 + 1, b_2; c_1 + 2 - r, c_2; t_1 - k, t_2, k, x, y)}{(c_1 - r) \, (c_1 - r + 1)}\nonumber\\
	& \quad + (-1)^k \, (-t_2)_k \, a \, b_2 \, y  \sum_{r = 1}^{s} \frac{\mathcal{F}^{(1)}_2 (a + 1, b_1, b_2 + 1; c_1 + 2 - r, c_2; t_1, t_2 - k, k, x, y)}{(c_1 - r) \, (c_1 - r + 1)}.
\end{align}
\end{theorem}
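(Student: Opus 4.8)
The plan is to reduce every multi-step recursion to a single-step contiguous relation obtained by elementary manipulation of the defining series \eqref{3.1}, and then to telescope. The engine throughout is the Pochhammer peeling identity
\begin{align}
(-t_1)_{(m+1)k} = (-t_1)_k \, (-t_1+k)_{mk} = (-t_1)_k \, \bigl(-(t_1-k)\bigr)_{mk}, \nonumber
\end{align}
together with $(-1)^{(m+1)k}=(-1)^k(-1)^{mk}$ (and the analogous pair in $t_2$). This is exactly the mechanism that converts a unit increase of the summation index $m$ into a decrease $t_1\mapsto t_1-k$ of the parameter, while splitting off the prefactor $(-1)^k(-t_1)_k$. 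All contiguity in the upper parameters is therefore generated by re-indexing a single summation variable.

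First I would establish the three elementary single-step relations. For the $a$-shift I would use $(a+1)_{m+n}-(a)_{m+n}=\tfrac{m+n}{a}(a)_{m+n}$ and split $m+n$ into its $m$- and $n$-parts; re-indexing $m\mapsto m+1$ in the first and $n\mapsto n+1$ in the second and applying the peeling identity yields the two-term relation
\begin{align}
& \mathcal{F}^{(1)}_2(a+1,\dots)-\mathcal{F}^{(1)}_2(a,\dots) \nonumber\\
& = \frac{(-1)^k(-t_1)_k\,b_1\,x}{c_1}\,\mathcal{F}^{(1)}_2(a+1,b_1+1,b_2;c_1+1,c_2;t_1-k,t_2,k,x,y) \nonumber\\
& \quad + \frac{(-1)^k(-t_2)_k\,b_2\,y}{c_2}\,\mathcal{F}^{(1)}_2(a+1,b_1,b_2+1;c_1,c_2+1;t_1,t_2-k,k,x,y). \nonumber
\end{align}
For the $b_1$-shift I would instead use $(b_1+1)_m-(b_1)_m=\tfrac{m}{b_1}(b_1)_m$, which carries only an $m$-factor and hence produces a \emph{single} term (the $t_1$-part alone), consistent with the one-sum form of the stated $b_1$-recursions.

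The $s$-step formulas then follow by telescoping. For the increasing $a$-recursion \eqref{e6.1} I would write $\mathcal{F}^{(1)}_2(a+s)-\mathcal{F}^{(1)}_2(a)=\sum_{r=0}^{s-1}\bigl[\mathcal{F}^{(1)}_2(a+r+1)-\mathcal{F}^{(1)}_2(a+r)\bigr]$, insert the single-step relation with $a\mapsto a+r$, and re-index $r\mapsto r+1$ so the sum runs from $1$ to $s$. The decreasing $a$-recursion uses the \emph{same} single-step identity telescoped downward (applied at $a\mapsto a-r-1$), which is precisely why no awkward $a+m+n-1$ denominators intrude and the prefactors stay clean; the two $b_1$-recursions are handled identically from the one-term $b_1$ relation.

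The main obstacle is the final ($c_1$) formula. Its single-step relation comes from $\tfrac{1}{(c_1-1)_m}-\tfrac{1}{(c_1)_m}=\tfrac{m}{(c_1-1)\,(c_1)_m}$, which already introduces the product denominator $(c_1-1)\,c_1$ and simultaneously forces the shifts $a\mapsto a+1$, $b_1\mapsto b_1+1$, $c_1\mapsto c_1+1$, $t_1\mapsto t_1-k$ in the summand. The delicate point is that each telescoping step $\mathcal{F}^{(1)}_2(c_1-r)-\mathcal{F}^{(1)}_2(c_1-r+1)$ must be evaluated by applying this relation at the shifted value $c_1-r+1$, which produces the denominator $(c_1-r)(c_1-r+1)$ and advances the first argument of the summand to $c_1+2-r$; keeping these two $r$-dependent shifts synchronised across the whole telescope, while verifying that the already-displaced parameters $a+1,b_1+1,t_1-k$ remain inert under the subsequent $c_1$-decrements, is the bookkeeping that requires the most care and is where I would expect the proof to be most error-prone.
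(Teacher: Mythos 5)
Your treatment of the first four recursions is essentially the paper's own proof. The paper establishes the $s=1$ case of the first formula by exactly the series manipulation you describe --- re-indexing $m\mapsto m+1$ (resp.\ $n\mapsto n+1$) and peeling $(-1)^{(m+1)k}\,(-t_1)_{(m+1)k}=(-1)^k(-t_1)_k\,(-1)^{mk}\,(-(t_1-k))_{mk}$, so that the two contiguous terms reproduce $\sum \frac{m}{a}\,\mathcal{A}_{m,n}\,x^m y^n$ and $\sum \frac{n}{a}\,\mathcal{A}_{m,n}\,x^m y^n$ --- and then passes from $s=1$ to general $s$ by induction, which is your telescoping written implicitly. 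Your single-term $b_1$ relation and the downward telescopes applied at shifted arguments are likewise the intended "same manner" arguments, so in substance nothing differs there.

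The genuine problem is the last ($c_1$-lowering) formula. Since $c_1$ enters the series only through $(c_1)_m$, your single-step identity $\frac{1}{(c_1-1)_m}-\frac{1}{(c_1)_m}=\frac{m}{(c_1-1)\,(c_1)_m}$ produces an $m$-factor only, hence exactly \emph{one} contiguous term, namely $\frac{(-1)^k(-t_1)_k\,a\,b_1\,x}{(c_1-1)\,c_1}\,\mathcal{F}^{(1)}_2(a+1,b_1+1,b_2;c_1+1,c_2;t_1-k,t_2,k,x,y)$, and after telescoping exactly one sum. But the statement you were asked to prove contains a \emph{second} sum, carrying $b_2+1$, $t_2-k$ and an unshifted $c_2$; your mechanism can never generate it, and your proposal neither produces it nor explains its absence. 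In fact that second sum is spurious: at $s=1$ compare coefficients of $x^0y^1$ --- the left-hand side gives $a\,b_2\,(-1)^k(-t_2)_k/c_2$, independent of $c_1$, while the stated right-hand side carries the additional contribution $a\,b_2\,(-1)^k(-t_2)_k/((c_1-1)\,c_1)$, which is nonzero in general. So what you outline proves a corrected (one-sum) version of that identity, not the identity as printed; a complete write-up must say so explicitly rather than present the telescoping as if it delivered the stated formula. (The paper never confronts this either: it proves only the first recursion in detail and asserts the remaining ones can be proved ``in the same manner.'')
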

\begin{proof}
To prove the formula \eqref{e6.1}, we begin with
\begin{align}
 &\mathcal{F}^{(1)}_2 (a, b_1, b_2; c_1, c_2 ; t_1, t_2, k, x, y) \nonumber\\
 & \quad + \frac{(-1)^k \, (-t_1)_k \, b_1 \, x}{c_1}   \mathcal{F}^{(1)}_2 (a + 1, b_1 + 1, b_2; c_1 + 1, c_2; t_1 - k, t_2, k, x, y) \nonumber\\
 & \quad  + \frac{(-1)^k \, (-t_2)_k \, b_2 \, y}{c_2}  \mathcal{F}^{(1)}_2 (a + 1, b_1, b_2 + 1; c_1, c_2 + 1; t_1, t_2 - k, k, x, y)\nonumber\\
& = \sum_{m,n \geq 0}  \, \frac{(a)_{m+n} \, (b_1)_m \, (b_2)_n \, (-1)^{(m + n) k} \, (-t_1)_{mk} \, (-t_2)_{nk}}{ (c_1)_{m} \, (c_2)_n \, m! \, n!} \ x^m \, y^n + \frac{(-1)^k \, (-t_1)_k \, b_1 \, x}{c_1} \nonumber\\
& \quad \times \sum_{m,n \geq 0}  \, \frac{(a + 1)_{m+n} \, (b_1 + 1)_m \, (b_2)_n \, (-1)^{(m + n) k} \, (-t_1 + k)_{mk} \, (-t_2)_{nk}}{ (c_1 + 1)_{m} \, (c_2)_n \, m! \, n!} \ x^m \, y^n + \frac{(-1)^k \, (-t_2)_k \, b_2 \, y}{c_2} \nonumber\\
& \quad \times \sum_{m, n \geq 0}  \, \frac{(a + 1)_{m+n} \, (b_1)_m \, (b_2 + 1)_n \, (-1)^{(m + n) k} \, (-t_1)_{mk} \, (-t_2 + k)_{nk}}{(c_1)_m \, (c_2 + 1)_{n} \, m! \, n!} \ x^m \, y^n\nonumber\\
& = \sum_{m,n \geq 0}  \, \frac{(a)_{m+n} \, (b_1)_m \, (b_2)_n \, (-1)^{(m + n) k} \, (-t_1)_{mk} \, (-t_2)_{nk}}{ (c_1)_{m} \, (c_2)_n \, m! \, n!} \ x^m \, y^n \nonumber\\
& \quad + \sum_{m, n \geq 0} \frac{m}{a} \, \frac{(a)_{m+n} \, (b_1)_m \, (b_2)_n \, (-1)^{(m + n) k} \, (-t_1)_{mk} \, (-t_2)_{nk}}{(c_1)_{m} \, (c_2)_n \, m! \, n!} \ x^m \, y^n \nonumber\\
& \quad + \sum_{m, n \geq 0}  \frac{n}{a} \, \frac{(a)_{m+n} \, (b_1)_m \, (b_2)_n \, (-1)^{(m + n) k} \, (-t_1)_{mk} \, (-t_2)_{nk}}{ (c_1)_{m} \, (c_2)_n \, m! \, n!} \ x^m \, y^n\nonumber\\
& =  \sum_{m,n \geq 0}  \frac{a + m + n}{a} \, \frac{(a)_{m+n} \, (b_1)_m \, (b_2)_n \, (-1)^{(m + n) k} \, (-t_1)_{mk} \, (-t_2)_{nk}}{ (c_1)_{m} \, (c_2)_n \, m! \, n!} \ x^m \, y^n\nonumber\\
& = \mathcal{F}^{(1)}_2 (a + 1, b_1, b_2; c_1, c_2; t_1, t_2, k, x, y). 
\end{align}
This shows the result is true for $s = 1$. Assuming the result to be true for $s = p$ leads to the validity of results for $s = p + 1$. 
 Hence the formula is true for each natural number.  The other formulae can be proved in the same manner. 
\end{proof}
Besides, we have several other difference and differential recursion formulae. We start our findings with the differential recursion formulae first. To obtain, we list simple differential relations as
\begin{align}
&	a \, \mathcal{F}^{(1)}_2 (a + 1)  = (a + \theta + \phi) \, \mathcal{F}^{(1)}_2;\\
& (a + \theta + \phi - 1) \, \mathcal{F}^{(1)}_2 (a - 1)  =	(a - 1) \, \mathcal{F}^{(1)}_2;\\
&	b_1 \, \mathcal{F}^{(1)}_2 (b_1 + 1)  = (b_1 + \theta) \, \mathcal{F}^{(1)}_2;\\
&(b_1 + \theta - 1) \, \mathcal{F}^{(1)}_2 (b_1 - 1)  =	(b_1 - 1) \, \mathcal{F}^{(1)}_2;\\
&	b_2 \, \mathcal{F}^{(1)}_2 (b_2 + 1)  = (b_2 + \phi) \, \mathcal{F}^{(1)}_2;\\
& (b_2  + \phi - 1) \, \mathcal{F}^{(1)}_2 (b_2 - 1)  =	(b_2 - 1) \, \mathcal{F}^{(1)}_2;\\
&	(c_1 - 1) \, \mathcal{F}^{(1)}_2 (c_1 - 1)  = (c_1 + \theta - 1) \, \mathcal{F}^{(1)}_2;\\
& (c_1 + \theta) \, \mathcal{F}^{(1)}_2 (c_1 + 1)  =	c_1 \, \mathcal{F}^{(1)}_2;\\
&	(c_2 - 1) \, \mathcal{F}^{(1)}_2 (c_2 - 1)  = (c_2 + \phi - 1) \, \mathcal{F}^{(1)}_2;\\
& (c_2 + \phi) \, \mathcal{F}^{(1)}_2 (c_2 + 1)  =	c_2 \, \mathcal{F}^{(1)}_2.
\end{align}
On combining any two of the above relations, we get first order or second order differential recursion relations. We produce here a list of such relations.
\begin{align}
& a \, (a - 1) \, \mathcal{F}^{(1)}_2 (a + 1) - (a + \theta + \phi) \, (a + \theta + \phi - 1) \mathcal{F}^{(1)}_2 (a - 1) = 0;\\
& a \, (b_1 - 1) \, \mathcal{F}^{(1)}_2 (a + 1) - (a + \theta + \phi) \, (b_1 + \theta  - 1) \mathcal{F}^{(1)}_2 (b_1 - 1) = 0;\\
& a \, (b_2 - 1) \, \mathcal{F}^{(1)}_2 (a + 1) - (a + \theta + \phi) \, (b_2 + \phi - 1) \mathcal{F}^{(1)}_2 (b_2 - 1) = 0;\\
& a \, c_1 \, \mathcal{F}^{(1)}_2 (a + 1) - (a + \theta + \phi) \, (c_1 + \theta) \mathcal{F}^{(1)}_2 (c_1 + 1) = 0;\\
& a \, c_2 \, \mathcal{F}^{(1)}_2 (a + 1) - (a + \theta + \phi) \, (c_2 + \phi) \mathcal{F}^{(1)}_2 (c_2 + 1) = 0;\\
& a \, (b_1 + \theta) \, \mathcal{F}^{(1)}_2 (a + 1) - b_1 \, (a + \theta + \phi) \, \mathcal{F}^{(1)}_2 (b_1 + 1) = 0;\\
		& a \, (b_2 + \phi) \, \mathcal{F}^{(1)}_2 (a + 1) - b_2 \, (a + \theta + \phi) \, \mathcal{F}^{(1)}_2 (b_2 + 1) = 0;\\
	& a \, (c_1 + \theta - 1) \, \mathcal{F}^{(1)}_2 (a + 1) - (c_1 - 1) \, (a + \theta + \phi) \, \mathcal{F}^{(1)}_2 (c_1 - 1) = 0;\\
	& a \, (c_2 + \phi - 1) \, \mathcal{F}^{(1)}_2 (a + 1) - (c_2 - 1) \, (a + \theta + \phi) \, \mathcal{F}^{(1)}_2 (c_2 - 1) = 0;\\
&(a + \theta + \phi - 1) \, (b_1 + \theta) \, \mathcal{F}^{(1)}_2 (a - 1) - b_1 \, (a - 1)  \, \mathcal{F}^{(1)}_2 (b_1 + 1) = 0;\\
 &	(a + \theta + \phi - 1) \, (b_2 + \phi) \, \mathcal{F}^{(1)}_2 (a - 1) - b_2 \, (a - 1)  \, \mathcal{F}^{(1)}_2 (b_2 + 1) = 0;\\
 &	(a + \theta + \phi - 1) \, (c_1 + \theta - 1) \, \mathcal{F}^{(1)}_2 (a - 1) - (c_1 - 1) \, (a - 1)  \, \mathcal{F}^{(1)}_2 (c_1 - 1) = 0;\\
 & (a + \theta + \phi - 1) \, (c_2 + \phi - 1) \, \mathcal{F}^{(1)}_2 (a - 1) - (c_2 - 1) \, (a - 1)  \, \mathcal{F}^{(1)}_2 (c_2 - 1) = 0;\\
 & (b_1 - 1) \,	(a + \theta + \phi - 1) \,  \mathcal{F}^{(1)}_2 (a - 1) -  (a - 1)  \, (b_1 + \theta - 1) \mathcal{F}^{(1)}_2 (b_1 - 1) = 0;\\
 & (b_2 - 1) \,	(a + \theta + \phi - 1) \,  \mathcal{F}^{(1)}_2 (a - 1) -  (a - 1)  \, (b_2 + \phi - 1) \mathcal{F}^{(1)}_2 (b_2 - 1) = 0;\\
 & c_1 \,	(a + \theta + \phi - 1) \,  \mathcal{F}^{(1)}_2 (a - 1) -  (a - 1)  \, (c_1 + \theta) \mathcal{F}^{(1)}_2 (c_1 + 1) = 0;\\
 & c_2 \,	(a + \theta + \phi - 1) \,  \mathcal{F}^{(1)}_2 (a - 1) -  (a - 1)  \, (c_2 + \phi) \mathcal{F}^{(1)}_2 (c_2 + 1) = 0;\\
 & b_1 \,	(b_1 - 1) \,  \mathcal{F}^{(1)}_2 (b_1 + 1) -  (b_1 + \theta)  \, (b_1 + \theta - 1) \mathcal{F}^{(1)}_2 (b_1 - 1) = 0;\\
 & b_1 \,	(b_2 + \phi) \,  \mathcal{F}^{(1)}_2 (b_1 + 1) -  b_2  \, (b_1 + \theta ) \mathcal{F}^{(1)}_2 (b_2 + 1) = 0;\\
  & b_1 \,	(b_2 - 1) \,  \mathcal{F}^{(1)}_2 (b_1 + 1) -  (b_1 + \theta)  \, (b_2 + \phi - 1) \mathcal{F}^{(1)}_2 (b_2 - 1) = 0;\\
  & b_1 \,	(c_1 + \theta - 1) \,  \mathcal{F}^{(1)}_2 (b_1 + 1) -  (c_1 - 1)  \, (b_1 + \theta) \mathcal{F}^{(1)}_2 (c_1 - 1) = 0;\\
  & b_1 \,	(c_2 + \phi - 1) \,  \mathcal{F}^{(1)}_2 (b_1 + 1) -  (c_2 - 1)  \, (b_1 + \theta) \mathcal{F}^{(1)}_2 (c_2 - 1) = 0;\\
  & b_1 \,	c_1 \,  \mathcal{F}^{(1)}_2 (b_1 + 1) -  (c_1 + \theta)  \, (b_1 + \theta) \mathcal{F}^{(1)}_2 (c_1 + 1) = 0;\\
  & b_1 \,	c_2 \,  \mathcal{F}^{(1)}_2 (b_1 + 1) -  (c_2 + \phi)  \, (b_1 + \theta) \mathcal{F}^{(1)}_2 (c_2 + 1) = 0;\\
   & b_2 \,	(b_1 - 1) \,  \mathcal{F}^{(1)}_2 (b_2 + 1) -  (b_2 + \phi)  \, (b_1 + \theta - 1) \mathcal{F}^{(1)}_2 (b_1 - 1) = 0;\\
  & b_2 \,	(b_2 - 1) \,  \mathcal{F}^{(1)}_2 (b_2 + 1) -  (b_2 + \phi)  \, (b_2 + \phi - 1) \mathcal{F}^{(1)}_2 (b_2 - 1) = 0;\\
  & b_2 \,	(c_1 + \theta - 1) \,  \mathcal{F}^{(1)}_2 (b_2 + 1) -  (c_1 - 1)  \, (b_2 + \phi) \mathcal{F}^{(1)}_2 (c_1 - 1) = 0;\\
  & b_2 \,	(c_2 + \phi - 1) \,  \mathcal{F}^{(1)}_2 (b_2 + 1) -  (c_2 - 1)  \, (b_2 + \phi) \mathcal{F}^{(1)}_2 (c_2 - 1) = 0;\\
  & b_2 \,	c_1 \,  \mathcal{F}^{(1)}_2 (b_2 + 1) -  (c_1 + \theta)  \, (b_2 + \phi) \mathcal{F}^{(1)}_2 (c_1 + 1) = 0;\\
  & b_2 \,	c_2 \,  \mathcal{F}^{(1)}_2 (b_2 + 1) -  (c_2 + \phi)  \, (b_2 + \phi) \mathcal{F}^{(1)}_2 (c_2 + 1) = 0;\\
  & (b_2 - 1) \,	(b_1 + \theta - 1) \,  \mathcal{F}^{(1)}_2 (b_1 - 1) -  (b_1 - 1)  \, (b_2 + \phi - 1) \mathcal{F}^{(1)}_2 (b_2 - 1) = 0;\\
  & (b_1 + \theta - 1) \,	(c_1 + \theta - 1) \,  \mathcal{F}^{(1)}_2 (b_1 - 1) -  (c_1 - 1)  \, (b_1 - 1) \mathcal{F}^{(1)}_2 (c_1 - 1) = 0;\\
  & (b_1 + \theta - 1) \,	(c_2 + \phi - 1) \,  \mathcal{F}^{(1)}_2 (b_1 - 1) -  (c_2 - 1)  \, (b_1 - 1) \mathcal{F}^{(1)}_2 (c_2 - 1) = 0;\\
  & 	c_1 \, (b_1 + \theta - 1) \, \mathcal{F}^{(1)}_2 (b_1 - 1) - (b_1 - 1)  (c_1 + \theta)  \,  \mathcal{F}^{(1)}_2 (c_1 + 1) = 0;\\
  & 	c_2 \, (b_1 + \theta - 1) \, \mathcal{F}^{(1)}_2 (b_1 - 1) - (b_1 - 1)  (c_2 + \phi)  \,  \mathcal{F}^{(1)}_2 (c_2 + 1) = 0;\\
  & (b_2 + \phi - 1) \,	(c_1 + \theta - 1) \,  \mathcal{F}^{(1)}_2 (b_2 - 1) -  (c_1 - 1)  \, (b_2 - 1) \mathcal{F}^{(1)}_2 (c_1 - 1) = 0;\\
  & (b_2 + \phi - 1) \,	(c_2 + \phi - 1) \,  \mathcal{F}^{(1)}_2 (b_2 - 1) -  (c_2 - 1)  \, (b_2 - 1) \mathcal{F}^{(1)}_2 (c_2 - 1) = 0;\\
  & 	c_1 \, (b_2 + \phi - 1) \, \mathcal{F}^{(1)}_2 (b_2 - 1) - (b_2 - 1)  (c_1 + \theta)  \,  \mathcal{F}^{(1)}_2 (c_1 + 1) = 0;\\
  & 	c_2 \, (b_2 + \phi - 1) \, \mathcal{F}^{(1)}_2 (b_2 - 1) - (b_2 - 1)  (c_2 + \phi)  \,  \mathcal{F}^{(1)}_2 (c_2 + 1) = 0;\\
   & 	c_1 \, (c_1 - 1) \, \mathcal{F}^{(1)}_2 (c_1 - 1) - (c_1 + \theta - 1)  (c_1 + \theta)  \,  \mathcal{F}^{(1)}_2 (c_1 + 1) = 0;\\
   & 	 (c_1 - 1) \, (c_2 + \phi - 1) \, \mathcal{F}^{(1)}_2 (c_1 - 1) - (c_2 - 1)  \, (c_1 + \theta - 1)    \mathcal{F}^{(1)}_2 (c_2 - 1) = 0;\\
   & 	c_2 \, (c_1 - 1) \, \mathcal{F}^{(1)}_2 (c_1 - 1) - (c_1 + \theta - 1)  (c_2 + \phi)  \,  \mathcal{F}^{(1)}_2 (c_2 + 1) = 0;\\
   & 	 (c_1 + \theta) \, (c_2 + \phi - 1) \, \mathcal{F}^{(1)}_2 (c_1 + 1) - c_1 \, (c_2 - 1)  \,  \mathcal{F}^{(1)}_2 (c_2 - 1) = 0;\\
   & 	c_2 \, (c_1 + \theta) \, \mathcal{F}^{(1)}_2 (c_1 + 1) - c_1 \, (c_2 + \phi)  \,  \mathcal{F}^{(1)}_2 (c_2 + 1) = 0;\\
   & 	c_2 \, (c_2 - 1) \, \mathcal{F}^{(1)}_2 (c_2 - 1) - (c_2 + \phi - 1)  (c_2 + \phi)  \,  \mathcal{F}^{(1)}_2 (c_2 + 1) = 0.
 \end{align}
Similarly using  the difference relations, 
\begin{align}
	&	a \, \mathcal{F}^{(1)}_2 (a + 1)  = \left(a + \frac{1}{k_1}\Theta_{t_1} + \frac{1}{k_2} \Theta_{t_2}\right) \, \mathcal{F}^{(1)}_2;\\
	& \left(a + \frac{1}{k_1}\Theta_{t_1} + \frac{1}{k_2} \Theta_{t_2} - 1\right) \, \mathcal{F}^{(1)}_2 (a - 1)  =	(a - 1) \, \mathcal{F}^{(1)}_2;\\
	&	b_1 \, \mathcal{F}^{(1)}_2 (b_1 + 1)  = \left(b_1 + \frac{1}{k_1} \Theta_{t_1}\right) \, \mathcal{F}^{(1)}_2;\\
	&\left(b_1 + \frac{1}{k_1} \Theta_{t_1} - 1\right) \, \mathcal{F}^{(1)}_2 (b_1 - 1)  =	(b_1 - 1) \, \mathcal{F}^{(1)}_2;\\
	&	b_2 \, \mathcal{F}^{(1)}_2 (b_2 + 1)  = \left(b_2 + \frac{1}{k_2} \, \Theta_{t_2}\right) \, \mathcal{F}^{(1)}_2;\\
	& \left(b_2 + \frac{1}{k_2} \, \Theta_{t_2} - 1\right) \, \mathcal{F}^{(1)}_2 (b_2 - 1)  =	(b_2 - 1) \, \mathcal{F}^{(1)}_2;\\
	&	(c_1 - 1) \, \mathcal{F}^{(1)}_2 (c_1 - 1)  = \left(c_1 + \frac{1}{k_1} \Theta_{t_1} - 1\right) \, \mathcal{F}^{(1)}_2;\\
	& \left(c_1 + \frac{1}{k_1} \Theta_{t_1}\right) \, \mathcal{F}^{(1)}_2 (c_1 + 1)  =	c_1 \, \mathcal{F}^{(1)}_2;\\
	&	(c_2 - 1) \, \mathcal{F}^{(1)}_2 (c_2 - 1)  = \left(c_2 + \frac{1}{k_2} \Theta_{t_2} - 1\right) \, \mathcal{F}^{(1)}_2;\\
	& \left(c_2 + \frac{1}{k_2} \Theta_{t_2}\right) \, \mathcal{F}^{(1)}_2 (c_2 + 1)  =	c_2 \, \mathcal{F}^{(1)}_2,
\end{align}
we will get the following difference recursion formulas obeyed by discrete Appell function $\mathcal{F}^{(1)}_2$. 
\begin{align}
	& a \, (a - 1) \, \mathcal{F}^{(1)}_2 (a + 1)\nonumber\\
	& \quad - \left(a + \frac{1}{k_1}\Theta_{t_1} + \frac{1}{k_2} \Theta_{t_2}\right) \, \left(a + \frac{1}{k_1}\Theta_{t_1} + \frac{1}{k_2} \Theta_{t_2} - 1\right) \mathcal{F}^{(1)}_2 (a - 1) = 0;\\
	& a \, (b_1 - 1) \, \mathcal{F}^{(1)}_2 (a + 1) \nonumber\\
	& \quad - \left(a + \frac{1}{k_1}\Theta_{t_1} + \frac{1}{k_2} \Theta_{t_2} \right) \, \left(b_1 + \frac{1}{k_1} \Theta_{t_1}  - 1\right) \mathcal{F}^{(1)}_2 (b_1 - 1) = 0;\\
	& a \, (b_2 - 1) \, \mathcal{F}^{(1)}_2 (a + 1)\nonumber\\
	& \quad - \left(a + \frac{1}{k_1}\Theta_{t_1} + \frac{1}{k_2} \Theta_{t_2}\right) \, \left(b_2 + \frac{1}{k_2} \Theta_{t_2} - 1\right) \mathcal{F}^{(1)}_1 (b_2 - 1) = 0;\\
	& a \, c_1 \, \mathcal{F}^{(1)}_2 (a + 1)\nonumber\\
&	\quad  - \left(a + \frac{1}{k_1}\Theta_{t_1} + \frac{1}{k_2} \Theta_{t_2}\right) \, \left(c_1 + \frac{1}{k_1} \Theta_{t_1} \right) \mathcal{F}^{(1)}_2 (c_1 + 1) = 0;\\
& a \, c_2 \, \mathcal{F}^{(1)}_2 (a + 1)\nonumber\\
&	\quad  - \left(a + \frac{1}{k_1}\Theta_{t_1} + \frac{1}{k_2} \Theta_{t_2}\right) \, \left(c_2 + \frac{1}{k_2} \Theta_{t_2} \right) \mathcal{F}^{(1)}_2 (c_2 + 1) = 0;\\
	& a \, \left(b_1 + \frac{1}{k_1} \Theta_{t_1}\right) \, \mathcal{F}^{(1)}_2 (a + 1) - b_1 \, \left(a + \frac{1}{k_1}\Theta_{t_1} + \frac{1}{k_2} \Theta_{t_2}\right) \, \mathcal{F}^{(1)}_2 (b_1 + 1) = 0;\\
	& a \, \left(b_2 + \frac{1}{k_2} \Theta_{t_2}\right) \, \mathcal{F}^{(1)}_2 (a + 1) - b_2 \, \left(a + \frac{1}{k_1}\Theta_{t_1} + \frac{1}{k_2} \Theta_{t_2}\right) \, \mathcal{F}^{(1)}_2 (b_2 + 1) = 0;\\
	& a \, \left(c_1 + \frac{1}{k_1} \Theta_{t_1} - 1\right) \, \mathcal{F}^{(1)}_2 (a + 1)\nonumber\\
	& \quad  - (c_1 - 1) \, \left(a + \frac{1}{k_1}\Theta_{t_1} + \frac{1}{k_2} \Theta_{t_2}\right) \, \mathcal{F}^{(1)}_2 (c_1 - 1) = 0;\\
	& a \, \left(c_2 + \frac{1}{k_2} \Theta_{t_2} - 1\right) \, \mathcal{F}^{(1)}_2 (a + 1)\nonumber\\
	& \quad  - (c_2 - 1) \, \left(a + \frac{1}{k_1}\Theta_{t_1} + \frac{1}{k_2} \Theta_{t_2}\right) \, \mathcal{F}^{(1)}_2 (c_2 - 1) = 0;\\
	&\left(a + \frac{1}{k_1}\Theta_{t_1} + \frac{1}{k_2} \Theta_{t_2} - 1\right) \, \left(b_1 + \frac{1}{k_1} \Theta_{t_1} \right) \, \mathcal{F}^{(1)}_2 (a - 1) - b_1 \, (a - 1)  \, \mathcal{F}^{(1)}_2 (b_1 + 1) = 0;\\
	&	\left(a + \frac{1}{k_1}\Theta_{t_1} + \frac{1}{k_2} \Theta_{t_2} - 1\right) \, \left(b_2 + \frac{1}{k_2}  \Theta_{t_2}\right) \, \mathcal{F}^{(1)}_2 (a - 1) - b_2 \, (a - 1)  \, \mathcal{F}^{(1)}_2 (b_2 + 1) = 0;\\
	&	\left(a + \frac{1}{k_1}\Theta_{t_1} + \frac{1}{k_2} \Theta_{t_2} - 1\right) \, \left(c_1 + \frac{1}{k_1} \Theta_{t_1} - 1\right) \, \mathcal{F}^{(1)}_2 (a - 1)\nonumber\\
	& \quad  - (c_1 - 1) \, (a - 1)  \, \mathcal{F}^{(1)}_2 (c_1 - 1) = 0;\\
	&	\left(a + \frac{1}{k_1}\Theta_{t_1} + \frac{1}{k_2} \Theta_{t_2} - 1\right) \, \left(c_2 + \frac{1}{k_2} \Theta_{t_2} - 1\right) \, \mathcal{F}^{(1)}_2 (a - 1)\nonumber\\
	& \quad  - (c_2 - 1) \, (a - 1)  \, \mathcal{F}^{(1)}_2 (c_2 - 1) = 0;\\
	& (b_1 - 1) \,	\left(a + \frac{1}{k_1}\Theta_{t_1} + \frac{1}{k_2} \Theta_{t_2} - 1\right) \,  \mathcal{F}^{(1)}_2 (a - 1) \nonumber\\
	& \quad -  (a - 1)  \, \left(b_1 + \frac{1}{k_1} \Theta_{t_1}  - 1\right) \mathcal{F}^{(1)}_2 (b_1 - 1) = 0;\\
	& (b_2 - 1) \,	\left(a + \frac{1}{k_1}\Theta_{t_1} + \frac{1}{k_2} \Theta_{t_2} - 1\right) \,  \mathcal{F}^{(1)}_2 (a - 1)\nonumber\\
	& \quad  -  (a - 1)  \, \left(b_2 + \frac{1}{k_2} \Theta_{t_2} - 1\right) \mathcal{F}^{(1)}_2 (b_2 - 1) = 0;\\
	& c_1 \,	\left(a + \frac{1}{k_1}\Theta_{t_1} + \frac{1}{k_2} \Theta_{t_2} - 1\right) \,  \mathcal{F}^{(1)}_2 (a - 1)\nonumber\\
	& \quad -  (a - 1)  \, \left(c_1 + \frac{1}{k_1} \Theta_{t_1}\right) \mathcal{F}^{(1)}_2 (c_1 + 1) = 0;\\
	& c_2 \,	\left(a + \frac{1}{k_1}\Theta_{t_1} + \frac{1}{k_2} \Theta_{t_2} - 1\right) \,  \mathcal{F}^{(1)}_2 (a - 1)\nonumber\\
	& \quad -  (a - 1)  \, \left(c_2 + \frac{1}{k_2} \Theta_{t_2}\right) \mathcal{F}^{(1)}_2 (c_2 + 1) = 0;\\
	& b_1 \,	(b_1 - 1) \,  \mathcal{F}^{(1)}_2 (b_1 + 1) -  \left(b_1 + \frac{1}{k_1} \Theta_{t_1} \right)  \, \left(b_1 + \frac{1}{k_1} \Theta_{t_1}  - 1\right) \mathcal{F}^{(1)}_2 (b_1 - 1) = 0;\\
	& b_1 \,	\left(b_2 + \frac{1}{k_2} \Theta_{t_2}\right) \,  \mathcal{F}^{(1)}_2 (b_1 + 1) -  b_2  \, \left(b_1 + \frac{1}{k_1} \Theta_{t_1}\right) \mathcal{F}^{(1)}_2 (b_2 + 1) = 0;\\
	& b_1 \,	(b_2 - 1) \,  \mathcal{F}^{(1)}_2 (b_1 + 1) -  \left(b_1 + \frac{1}{k_1} \Theta_{t_1}\right)  \, \left(b_2 + \frac{1}{k_2} \Theta_{t_2} - 1\right) \mathcal{F}^{(1)}_2 (b_2 - 1) = 0;\\
	& b_1 \,	\left(c_1 + \frac{1}{k_1} \Theta_{t_1} -  1\right) \,  \mathcal{F}^{(1)}_2 (b_1 + 1) -  (c_1 - 1)  \, \left(b_1 + \frac{1}{k_1} \Theta_{t_1} \right) \mathcal{F}^{(1)}_2 (c_1 - 1) = 0;\\
	& b_1 \,	\left(c_2 + \frac{1}{k_2} \Theta_{t_2} -  1\right) \,  \mathcal{F}^{(1)}_2 (b_1 + 1) -  (c_2 - 1)  \, \left(b_1 + \frac{1}{k_1} \Theta_{t_1} \right) \mathcal{F}^{(1)}_2 (c_2 - 1) = 0;\\
	& b_1 \,	c_1 \,  \mathcal{F}^{(1)}_2 (b_1 + 1) -  \left(c_1 + \frac{1}{k_1} \Theta_{t_1}\right)  \, \left(b_1 + \frac{1}{k_1} \Theta_{t_1}\right) \mathcal{F}^{(1)}_2 (c_1 + 1) = 0;\\
	& b_1 \,	c_2 \,  \mathcal{F}^{(1)}_2 (b_1 + 1) -  \left(c_2 + \frac{1}{k_2} \Theta_{t_2}\right)  \, \left(b_1 + \frac{1}{k_1} \Theta_{t_1}\right) \mathcal{F}^{(1)}_2 (c_2 + 1) = 0;\\
	& b_2 \,	(b_1 - 1) \,  \mathcal{F}^{(1)}_2 (b_2 + 1) -  \left(b_2 + \frac{1}{k_2} \Theta_{t_2}\right)  \, \left(b_1 + \frac{1}{k_1} \Theta_{t_1}  - 1\right) \mathcal{F}^{(1)}_2 (b_1 - 1) = 0;\\
	& b_2 \,	(b_2 - 1) \,  \mathcal{F}^{(1)}_2 (b_2 + 1) -  \left(b_2 + \frac{1}{k_2} \Theta_{t_2}\right)  \, \left(b_2 + \frac{1}{k_2} \Theta_{t_2}  - 1\right) \mathcal{F}^{(1)}_2 (b_2 - 1) = 0;\\
	& b_2 \,	\left(c_1 + \frac{1}{k_1} \Theta_{t_1} - 1\right) \,  \mathcal{F}^{(1)}_2 (b_2 + 1) -  (c_1 - 1)  \, \left(b_2 + \frac{1}{k_2}  \Theta_{t_2}\right) \mathcal{F}^{(1)}_2 (c_1 - 1) = 0;\\
	& b_2 \,	\left(c_2 + \frac{1}{k_2} \Theta_{t_2} - 1\right) \,  \mathcal{F}^{(1)}_2 (b_2 + 1) -  (c_2 - 1)  \, \left(b_2 + \frac{1}{k_2}  \Theta_{t_2}\right) \mathcal{F}^{(1)}_2 (c_2 - 1) = 0;\\
	& b_2 \,	c_1 \,  \mathcal{F}^{(1)}_2 (b_2 + 1) -  \left(c_1 + \frac{1}{k_1} \Theta_{t_1}\right)  \, \left(b_2 + \frac{1}{k_2} \Theta_{t_2}\right) \mathcal{F}^{(1)}_2 (c_1 + 1) = 0;\\
	& b_2 \,	c_2 \,  \mathcal{F}^{(1)}_2 (b_2 + 1) -  \left(c_2 + \frac{1}{k_2} \Theta_{t_2}\right)  \, \left(b_2 + \frac{1}{k_2} \Theta_{t_2}\right) \mathcal{F}^{(1)}_2 (c_2 + 1) = 0;\\
	& (b_2 - 1) \,	\left(b_1 + \frac{1}{k_1} \Theta_{t_1}  - 1\right) \,  \mathcal{F}^{(1)}_2 (b_1 - 1) -  (b_1 - 1)  \, \left(b_2 + \frac{1}{k_2}  \Theta_{t_2} - 1\right) \mathcal{F}^{(1)}_2 (b_2 - 1) = 0;\\
	& \left(b_1 + \frac{1}{k_1} \Theta_{t_1} - 1\right) \,	\left(c_1 + \frac{1}{k_1} \Theta_{t_1} - 1\right) \,  \mathcal{F}^{(1)}_2 (b_1 - 1)\nonumber\\
	& \quad -  (c_1 - 1)  \, (b_1 - 1) \mathcal{F}^{(1)}_2 (c_1 - 1) = 0;\\
	& \left(b_1 + \frac{1}{k_1} \Theta_{t_1} - 1\right) \,	\left(c_2 + \frac{1}{k_2} \Theta_{t_2} - 1\right) \,  \mathcal{F}^{(1)}_2 (b_1 - 1)\nonumber\\
	& \quad -  (c_2 - 1)  \, (b_1 - 1) \mathcal{F}^{(1)}_2 (c_2 - 1) = 0;\\
	& 	c_1 \, \left(b_1 + \frac{1}{k_1} \Theta_{t_1} - 1\right) \, \mathcal{F}^{(1)}_2 (b_1 - 1) - (b_1 - 1)  \left(c_1 + \frac{1}{k_1} \Theta_{t_1}\right)  \,  \mathcal{F}^{(1)}_2 (c_1 + 1) = 0;\\
	& 	c_2 \, \left(b_1 + \frac{1}{k_1} \Theta_{t_1} - 1\right) \, \mathcal{F}^{(1)}_2 (b_1 - 1) - (b_1 - 1)  \left(c_2 + \frac{1}{k_2} \Theta_{t_2}\right)  \,  \mathcal{F}^{(1)}_2 (c_2 + 1) = 0;\\
	& \left(b_2 + \frac{1}{k_2} \Theta_{t_2} - 1\right) \,	\left(c_1 + \frac{1}{k_1} \Theta_{t_1} - 1\right) \,  \mathcal{F}^{(1)}_2 (b_2 - 1)\nonumber\\
	& \quad -  (c_1 - 1)  \, (b_2 - 1) \mathcal{F}^{(1)}_2 (c_1 - 1) = 0;\\
	& \left(b_2 + \frac{1}{k_2} \Theta_{t_2} - 1\right) \,	\left(c_2 + \frac{1}{k_2} \Theta_{t_2} - 1\right) \,  \mathcal{F}^{(1)}_2 (b_2 - 1)\nonumber\\
	& \quad -  (c_2 - 1)  \, (b_2 - 1) \mathcal{F}^{(1)}_2 (c_2 - 1) = 0;\\
	& 	c_1 \, \left(b_2 + \frac{1}{k_2}  \Theta_{t_2} - 1\right) \, \mathcal{F}^{(1)}_2 (b_2 - 1) - (b_2 - 1)  \left(c_1 + \frac{1}{k_1} \Theta_{t_1}\right)  \,  \mathcal{F}^{(1)}_2 (c_1 + 1) = 0;\\
	& 	c_2 \, \left(b_2 + \frac{1}{k_2}  \Theta_{t_2} - 1\right) \, \mathcal{F}^{(1)}_2 (b_2 - 1) - (b_2 - 1)  \left(c_2 + \frac{1}{k_2} \Theta_{t_2}\right)  \,  \mathcal{F}^{(1)}_2 (c_2 + 1) = 0;\\
	& 	c_1 \, (c_1 - 1) \, \mathcal{F}^{(1)}_2 (c_1 - 1) - \left(c_1 + \frac{1}{k_1} \Theta_{t_1} - 1\right)  \left(c_1 + \frac{1}{k_1} \Theta_{t_1}\right)  \,  \mathcal{F}^{(1)}_2 (c_1 + 1) = 0;\\
	& 	 (c_1 - 1) \, \left(c_2 + \frac{1}{k_2} \Theta_{t_2} - 1\right) \, \mathcal{F}^{(1)}_2 (c_1 - 1) - (c_2 - 1)  \, \left(c_1 + \frac{1}{k_1} \Theta_{t_1} - 1\right)    \mathcal{F}^{(1)}_2 (c_2 - 1) = 0;\\
	& 	c_2 \, (c_1 - 1) \, \mathcal{F}^{(1)}_2 (c_1 - 1) - \left(c_1 + \frac{1}{k_1} \Theta_{t_1} - 1\right)  \left(c_2 + \frac{1}{k_2} \Theta_{t_2}\right) \,  \mathcal{F}^{(1)}_2 (c_2 + 1) = 0;\\
	& 	 \left(c_1 + \frac{1}{k_1} \Theta_{t_1}\right) \, \left(c_2 + \frac{1}{k_2} \Theta_{t_2} - 1\right) \, \mathcal{F}^{(1)}_2 (c_1 + 1) - c_1 \, (c_2 - 1)  \,  \mathcal{F}^{(1)}_2 (c_2 - 1) = 0;\\
	& 	c_2 \, \left(c_1 + \frac{1}{k_1} \Theta_{t_1} \right) \, \mathcal{F}^{(1)}_2 (c_1 + 1) - c_1 \, \left(c_2 + \frac{1}{k_2} \Theta_{t_2}\right)  \,  \mathcal{F}^{(1)}_2 (c_2 + 1) = 0;\\
	& 	c_2 \, (c_2 - 1) \, \mathcal{F}^{(1)}_2 (c_2 - 1) - \left(c_2 + \frac{1}{k_2} \Theta_{t_2} - 1\right)  \left(c_2 + \frac{1}{k_2} \Theta_{t_2}\right)  \,  \mathcal{F}^{(1)}_2 (c_2 + 1) = 0.
\end{align}

\section{Discrete  Appell function $\mathcal{F}^{(2)}_2$}
As given in the introduction that there are two principal discrete forms of Appell function $F_2$. In the preceeding sections, we elaborated the discrete Appell function $\mathcal{F}^{(1)}_2$ rigorously. In this section, we acquire results related to discrete function $\mathcal{F}^{(2)}_2$. The way we approach in findings of $\mathcal{F}^{(2)}_2$ is similar to $\mathcal{F}^{(1)}_2$, so omit the proof and produced the results directly.  

The discrete Appell function $\mathcal{F}^{(2)}_2$ converges absolutely when $\vert x\vert + \vert y\vert < 1$. For different values of $k$, the discrete Appell function $\mathcal{F}^{(2)}_2$ reduces into special complex functions. In particular, for $k = 0$:
\begin{align}
	\mathcal{F}^{(2)}_2(a, b_1, b_2; c_1, c_2; t, 0, x, y)
	% & = \sum_{m, n \geq 0} \frac{(a)_{m+n} \, (b_1)_m \, (b_2)_n}{ (c_1)_{m} \, (c_2)_n \, m! \, n!} \ x^m \, y^n\nonumber\\
	 = F_2 (a, b_1, b_2; c_1, c_2; x, y).
\end{align}
For $k = 1$, the discrete function $\mathcal{F}^{(2)}_2$ turns out into a Kamp\'e de F\'eriet function as:
\begin{align}
	 \mathcal{F}^{(2)}_2(a, b_1, b_2; c_1, c_2; t, 1, x, y) 
%	& = \sum_{m, n \geq 0} \frac{(a)_{m+n} \, (b_1)_m \, (b_2)_n \, (-1)^{(m + n)} \, (-t)_{m + n}}{ (c_1)_{m} \, (c_2)_n \, m! \, n!} \ x^m \, y^n\nonumber\\
	= F_{{0}:{1}, {1}} ^{{2}:{1}, {1}}\left(\begin{array}{ccc}
		a, -t: & b_1 & b_2\\
		-: & c_1 & c_2 
	\end{array}; -x, \, -y\right).
\end{align}
The limiting cases of discrete Appell function $\mathcal{F}^{(2)}_2$ give the second discrete analogues of Humbert function $\psi_1$ and $\psi_2$. We denote these two discrete Humbert functions by $\psi^{(2)}_1$, $\psi^{(2)}_2$; defined as
\begin{align}
	\psi^{(2)}_1 \left(a, b_1; c_1, c_2; t, k, x,  y\right) & = \sum_{m,n \geq 0} \frac{(a)_{m+n} \, (b_1)_m  \, (-1)^{(m + n) k} \, (-t)_{(m + n)k}}{ (c_1)_{m} \, (c_2)_n \, m! \, n!} \ x^m \, y^n ,\nonumber
	\\[5pt]
	\psi^{(2)}_2 \left( a; c_1, c_2; t, k, x,  y\right) & = \sum_{m,n \geq 0} \frac{(a)_{m+n} \, (-1)^{(m + n) k} \, (-t)_{(m + n)k}}{ (c_1)_{m} \, (c_2)_n \, m! \, n!} \ x^m \, y^n.
\end{align}
This can be verified as 
\begin{align}
	 \lim_{\varepsilon \to 0}  \mathcal{F}^{(2)}_2 \left(a, b_1, \frac{1}{\varepsilon}; c_1, c_2; t, k, x, \varepsilon \, y\right) 	& = \psi^{(1)}_1 \left(a, b_1; c_1, c_2; t, k, x,  y\right)
\end{align}
and
\begin{align}
	\lim_{\varepsilon \to 0}  \mathcal{F}^{(2)}_2 \left(a, \frac{1}{\varepsilon}, \frac{1}{\varepsilon} ; c_1, c_2; t, k, \varepsilon \, x, \varepsilon \, y\right) = \psi^{(1)}_2 \left( a; c_1, c_2; t, k, x,  y\right).
\end{align}
The discrete Appell function $F_2^{(2)}$ satisfies the following difference-differential equations
\begin{align}
	\left(\theta \left(\theta + c_1 - 1\right) - k \, (-1)^k \, (-t)_k \, x \, \rho_t^k \,  \left(\frac{1}{k} \Theta_t + a\right) \, \left(\theta  + b_1\right) \right) \mathcal{F}^{(2)}_2 = 0.
\end{align}	    
\begin{align}
	\left(\phi \, \left(\phi + c_2 - 1\right) - k \, (-1)^k \, (-t)_k \, y \, \rho_t^k \,  \left(\frac{1}{k} \Theta_t + a\right) \, \left(\phi  + b_2 \right) \right) \mathcal{F}^{(2)}_2 = 0.
\end{align}	    
%and
%\begin{align}
%	\left[(-t_2)_k \, y \, \Theta_{t_1} \, \left(\frac{1}{k} \Theta_{t_1}  + c_1 - 1\right) \left(\frac{1}{k} \Theta_{t_2}  + b_2\right) - (-t_1)_k \, x \, \Theta_{t_2} \, \left(\frac{1}{k} \Theta_{t_2}  + c_2 - 1\right) \left(\frac{1}{k} \Theta_{t_1}  + b_1\right)\right] \,  \mathcal{F}^{(2)}_2 = 0. 
%\end{align}
\subsection{Integral representations}
We now find the integral representations of the discrete Appell  function $\mathcal{F}^{(2)}_2$.
\begin{theorem}
	Let $a$, $b_1$, $b_2$, $c_1$, $c_2$ and $t$ be complex numbers. Then for $\vert x\vert  + \vert y\vert < 1$, the discrete function defined in \eqref{3.2} can be represented in the integral forms as
	\begin{align}
		& \mathcal{F}^{(2)}_2(a, b_1, b_2; c_1, c_2; t, k, x, y)\nonumber\\
		& =\Gamma \left(\begin{array}{c}
			c_1, c_2\\
			b_1, b_2, c_1 - b_1, c_2 - b_2
		\end{array}\right) \int_{0}^{1} \int_{0}^{1} u^{b_1 - 1} v^{b_2 - 1} (1-u)^{c_1 -  b_1 - 1} (1 - v)^{c_2 - b_2 - 1} \nonumber\\
		& \quad \times F_{{0}:{0}; {0}} ^{{k + 1}:{0}; {0}}\left(\begin{array}{ccc}
			a, \frac{- t}{k}, \dots, \frac{- t + k - 1}{k}: & -  ; & -\\
			-: & - ; & - 
		\end{array}; (-k)^k \, u x,  (-k)^k \, v y\right) du \, dv;\nonumber\\
	& = \frac{1}{\Gamma (a)} \int_{0}^{\infty} e^{-u} \, u^{a - 1} \nonumber\\
	& \quad \times F_{{0}:{1}; {1}} ^{{k}:{1}; {1}}\left(\begin{array}{ccc}
		\frac{- t}{k}, \dots, \frac{- t + k - 1}{k} : & b_1; & b_2\\
		: & c_1; & c_2
	\end{array}; (-k)^k \, u x,  (-k)^k \, u y\right) du;\nonumber\\
	& = \frac{1}{\Gamma (b_1)} \int_{0}^{\infty} e^{-u} \, u^{b_1 - 1} \nonumber\\
	& \quad \times F_{{0}:{1}; {1}} ^{{k + 1}:{0}; { 1}}\left(\begin{array}{ccc}
		a, \frac{- t}{k}, \dots, \frac{- t + k - 1}{k}: & -; & b_2\\
		-: & c_1 ; & c_2 
	\end{array}; (-k)^k \, u x,  (-k)^k \, y\right) du;\\
	& = \frac{1}{\Gamma (b_2)} \int_{0}^{\infty} e^{-v} \, v^{b_2 - 1} \nonumber\\
	& \quad \times F_{{0}:{1}; {1}} ^{{k + 1}:{1}; {0}}\left(\begin{array}{ccc}
		a, \frac{- t}{k}, \dots, \frac{- t + k - 1}{k}: & b_1; & -\\
		-: & c_1; & c_2 
	\end{array}; (-k)^k \, x,  (-k)^k \, v y\right) dv;\\
& = \frac{1}{\Gamma (-t)} \int_{0}^{\infty} e^{-v} \, v^{-t - 1}  F_{{0}:{1}; {1}} ^{{1}:{1}; {1}}\left(\begin{array}{ccc}
a: & b_1; & b_2\\
-: & c_1; & c_2 
\end{array}; (-v)^k \, x,  (-v)^k \, y\right) dv.
\end{align}

\end{theorem}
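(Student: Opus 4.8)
The plan is to mirror the proof of Theorem \ref{t7}, exploiting the fact that the only structural difference between $\mathcal{F}^{(2)}_2$ and $\mathcal{F}^{(1)}_2$ is that the discretizing Pochhammer symbol now carries the \emph{combined} index $(m+n)k$ rather than two separate indices $mk_1$ and $nk_2$. Consequently, after the factorization
\begin{align*}
(-t)_{(m+n)k} = k^{(m+n)k}\,\prod_{i=0}^{k-1}\left(\frac{-t+i}{k}\right)_{m+n},
\end{align*}
the $k$ parameters $\tfrac{-t}{k},\dots,\tfrac{-t+k-1}{k}$ all acquire the subscript $m+n$ and therefore migrate into the \emph{upper-left} slot of the Kampé de Fériet function (the row indexed by $m+n$), instead of being split between the two lower numerator slots as happened for $\mathcal{F}^{(1)}_2$. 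Combining $(-1)^{(m+n)k}k^{(m+n)k} = ((-k)^k)^{m+n}$ with $x^m y^n$ produces exactly the arguments $(-k)^k(\cdot)x$ and $(-k)^k(\cdot)y$ recorded in each line.

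For the double-integral form I would substitute the two Beta-type relations \eqref{58} for $(b_1)_m/(c_1)_m$ and $(b_2)_n/(c_2)_n$, interchange the (absolutely convergent) double sum with the double integral, pull out $u^{b_1-1}(1-u)^{c_1-b_1-1}$ and $v^{b_2-1}(1-v)^{c_2-b_2-1}$, and then apply the factorization above; what remains under the integral is precisely $F^{k+1:0;0}_{0:0;0}$ with upper parameters $a,\tfrac{-t}{k},\dots,\tfrac{-t+k-1}{k}$ and arguments $(-k)^k ux$, $(-k)^k vy$. The four single-integral forms follow the same template but replace one Pochhammer symbol by its Gamma-integral. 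Using $(a)_{m+n}=\tfrac{1}{\Gamma(a)}\int_0^\infty e^{-u}u^{a+m+n-1}\,du$ yields the $\Gamma(a)$ form, where $u^{m+n}$ splits as $u^m u^n$ so both arguments carry $u$, giving $F^{k:1;1}_{0:1;1}$; using $(b_1)_m=\tfrac{1}{\Gamma(b_1)}\int_0^\infty e^{-u}u^{b_1+m-1}\,du$ gives the $\Gamma(b_1)$ form $F^{k+1:0;1}_{0:1;1}$ in which only the $x$-argument carries $u$; the symmetric choice for $(b_2)_n$ gives the $\Gamma(b_2)$ form $F^{k+1:1;0}_{0:1;1}$; and finally $(-t)_{(m+n)k}=\tfrac{1}{\Gamma(-t)}\int_0^\infty e^{-v}v^{-t+(m+n)k-1}\,dv$ gives the $\Gamma(-t)$ form, where $(-1)^{(m+n)k}v^{(m+n)k}=((-v)^k)^{m+n}$ collapses the discretizing parameters entirely and leaves the ordinary Appell-type series $F^{1:1;1}_{0:1;1}$ with arguments $(-v)^k x$, $(-v)^k y$.

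The routine calculations here are the term-by-term rearrangements; the only point demanding attention is the bookkeeping of which Pochhammer symbols survive with index $m+n$ versus $m$ or $n$ after each substitution, since this alone fixes the Kampé de Fériet signature in each line. The main obstacle, exactly as in Theorem \ref{t7}, is justifying the interchange of summation and integration. This is supplied by the absolute convergence of $\mathcal{F}^{(2)}_2$ for $\vert x\vert+\vert y\vert<1$ established at the start of the section, together with the integrability of the Beta-type weights on $(0,1)$ and the exponential decay $e^{-u}$, $e^{-v}$ on $(0,\infty)$, so that a dominated-convergence argument legitimizes the formal manipulations; the remaining steps are then purely computational, which is why the statement groups all five representations together.
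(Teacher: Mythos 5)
Your proposal is correct and follows essentially the same route as the paper: the paper omits this proof precisely because it is the same argument as for Theorem \ref{t7}, namely substituting the Beta-integral for $(b_i)_m/(c_i)_m$ (respectively the Gamma-integral for a chosen Pochhammer symbol), interchanging sum and integral, and applying the factorization $(-t)_{(m+n)k}=k^{(m+n)k}\prod_{i=0}^{k-1}\bigl(\tfrac{-t+i}{k}\bigr)_{m+n}$ so that $(-1)^{(m+n)k}k^{(m+n)k}$ combines into $((-k)^k)^{m+n}$. Your bookkeeping of which Pochhammer symbols carry index $m+n$ versus $m$ or $n$, and hence the Kamp\'e de F\'eriet signatures in each of the five representations, matches the stated results exactly.
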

\subsection{Differential formulae}
We have the following differential formulae satisfied by discrete Appell function $\mathcal{F}^{(2)}_2$:
\begin{align}
%	&	(\Delta_{t})^r \mathcal{F}^{(2)}_2(a, b_1, b_2; c_1, c_2; t, k, x, y) \nonumber\\
%	& = \frac{(a)_r \, (b_1)_r \, k^r \, x^r \, (-1)^{r(k - 1)} (-t)_{r (k - 1)}}{(c_1)_r} \nonumber\\
%	& \quad \times  \mathcal{F}^{(2)}_2(a + r, b_1 + r, b_2; c_1 + r, c_2; t - r(k - 1), k, x, y);\nonumber\\
%	& (\Theta_{t})^r \mathcal{F}^{(2)}_2(a, b_1, b_2; c_1, c_2; t, k, x, y) \nonumber\\
%	& = \frac{(-1)^{rk} \, (a)_r \, (b_1)_r \, (-t)_{rk} \, x^r \, k^r}{(c_1)_r} \, \mathcal{F}^{(2)}_2(a + r, b_1 + r, b_2; c_1 + r, c_2; t - rk, k, x, y);\nonumber\\
	& (\theta)^r \mathcal{F}^{(2)}_2(a, b_1, b_2; c_1, c_2; t, k, x, y) \nonumber\\
	& = \frac{(-1)^{rk} \, (a)_r \, (b_1)_r \, (-t)_{rk} \, x^r}{(c_1)_r} \, \mathcal{F}^{(2)}_2 (a + r, b_1 + r, b_2; c_1 + r, c_2; t - rk, k, x, y);\nonumber\\
	& (\phi)^r \mathcal{F}^{(2)}_2(a, b_1, b_2; c_1, c_2; t, k, x, y) \nonumber\\
	& = \frac{(-1)^{rk} \, (a)_r \, (b_2)_r \, (-t)_{rk} \, y^r}{(c_2)_r} \, \mathcal{F}^{(2)}_2(a + r, b_1, b_2 + r; c_1, c_2 + r; t - rk, k, x, y);\\
	& \left(\frac{\partial}{\partial x}\right)^r \left[x^{b_1 + r - 1} \mathcal{F}^{(2)}_2(a, b_1, b_2; c_1, c_2; t, k, x, y)\right]\nonumber\\
	& = x^{b_1 - 1} \, (b_1)_r \mathcal{F}^{(2)}_2(a, b_1 + r, b_2; c_1, c_2; t, k, x, y);\nonumber\\
	& \left(\frac{\partial}{\partial y}\right)^r [y^{b_2 + r - 1} \mathcal{F}^{(2)}_2(a, b_1, b_2; c_1, c_2; t, k, x, y)]\nonumber\\
	& = y^{b_2 - 1} \, (b_2)_r \mathcal{F}^{(2)}_2 (a, b_1, b_2 + r; c_1, c_2; t, k, x, y);\\
	& \left(\frac{\partial}{\partial x}\right)^r [x^{a + r - 1} \mathcal{F}^{(2)}_2(a, b_1, b_2; c_1, c_2; t, k, x, xy)]\nonumber\\
	& = x^{a - 1} \, (a)_r \mathcal{F}^{(2)}_2 (a + r, b_1, b_2; c_1, c_2; t, k, x, xy);\\
	& \left(\frac{\partial}{\partial y}\right)^r [y^{a + r - 1} \mathcal{F}^{(2)}_2 (a, b_1, b_2; c_1, c_2; t, k, xy, y)]\nonumber\\
	& = y^{a - 1} \, (a)_r \mathcal{F}^{(2)}_2(a + r, b_1, b_2; c_1, c_2; t, k, xy, y);\\
	& \left(\frac{\partial}{\partial x}\right)^r [x^{c_1 - 1} \mathcal{F}^{(2)}_2(a, b_1, b_2; c_1, c_2; t, k, x, y)]\nonumber\\
	& = (-1)^r \, (1 - c_1)_r \, x^{c_1 - r - 1} \mathcal{F}^{(2)}_2(a, b_1, b_2; c_1 - r, c_2; t, k, x, y);\\
	& \left(\frac{\partial}{\partial y}\right)^r [y^{c_2 - 1} \mathcal{F}^{(2)}_2(a, b_1, b_2; c_1, c_2; t, k, x, y)]\nonumber\\
	& = (-1)^r \, y^{c_2 - r - 1} \, (1 - c_2)_r \mathcal{F}^{(2)}_2(a, b_1, b_2; c_1, c_2 - r; t, k, x, y).
\end{align}
\subsection{Finite and infinite summation formulae}
Now, we establish some finite and infinite summation formulae in terms of discrete Appell function $\mathcal{F}^{(2)}_2$ in the following theorem.
\begin{theorem}
	\begin{align}
		& \mathcal{F}^{(2)}_2(a, b_1 + r, b_2; c_1, c_2; t, k, x, y)\nonumber\\
		& = \sum_{s = 0}^{r} {r \choose s} \frac{(a)_s \, (-1)^{sk} \, (-t_1)_{sk}}{(c_1)_s} \, x^s \, \mathcal{F}^{(2)}_2(a + s, b_1 + s, b_2; c_1 + s, c_2; t - sk, t_2, k, x, y);\nonumber\\
		& \mathcal{F}^{(2)}_2(a, b_1, b_2 + r; c_1, c_2; t, k, x, y)\nonumber\\
		& = \sum_{s = 0}^{r} {r \choose s} \frac{(a)_s \, (-1)^{sk} \, (-t_2)_{sk}}{(c_2)_s} \, y^s \, \mathcal{F}^{(2)}_2(a + s, b_1, b_2 + s; c_1, c_2 + s; t - sk, k, x, y).\label{5.2}
	\end{align}
	\begin{align}
		&\sum_{r = 0}^{\infty} \frac{(a)_r}{r !} \, z^r \, \mathcal{F}^{(2)}_2 (a + r, b_1, b_2; c_1, c_2 ; t, k, x, y)\nonumber\\
		& = (1 - z)^{-a} \, \mathcal{F}^{(2)}_2 \left(a, b_1, b_2; c_1, c_2; t, k, \frac{x}{1 - z}, \frac{y}{1 - z}\right)\\
		&\sum_{r = 0}^{\infty} \frac{(b_1)_r}{r !} \, z^r \, \mathcal{F}^{(2)}_2 (a, b_1 + r, b_2; c_1, c_2 ; t, k, x, y)\nonumber\\
		& = (1 - z)^{-b_1} \, \mathcal{F}^{(2)}_2 \left(a, b_1, b_2; c_1, c_2; t, k, \frac{x}{1 - z}, y\right);\\
		&\sum_{r = 0}^{\infty} \frac{(b_2)_r}{r !} \, z^r \, \mathcal{F}^{(2)}_2 (a, b_1, b_2 + r; c_1, c_2 ; t, k, x, y)\nonumber\\
		& = (1 - z)^{-b_2} \, \mathcal{F}^{(1)}_2 \left(a, b_1, b_2; c_1, c_2; t, k, x, \frac{y}{1 - z}\right). 
	\end{align}
\end{theorem}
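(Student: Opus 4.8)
The plan is to mirror the proof of the corresponding theorem for $\mathcal{F}^{(1)}_2$ in Section 4, handling the two finite formulae and the three infinite formulae by two separate devices. For the finite formulae I would combine the Leibnitz rule with the differential formulae already recorded for $\mathcal{F}^{(2)}_2$; for the infinite formulae I would start from the right-hand side and reorganise the double series using the binomial theorem together with the Pochhammer splitting identities. Absolute convergence for $|x|+|y|<1$, established earlier, justifies every rearrangement.

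For the first finite formula I would evaluate $\left(\frac{\partial}{\partial x}\right)^r$ applied to $x^{b_1+r-1}\,\mathcal{F}^{(2)}_2(a,b_1,b_2;c_1,c_2;t,k,x,y)$ in two ways. On one side the differential formula $\left(\frac{\partial}{\partial x}\right)^r[x^{b_1+r-1}\mathcal{F}^{(2)}_2]=x^{b_1-1}(b_1)_r\,\mathcal{F}^{(2)}_2(a,b_1+r,b_2;\dots)$ gives a closed form. On the other side the Leibnitz rule expands it as $\sum_{s=0}^{r}\binom{r}{s}\bigl[\left(\frac{\partial}{\partial x}\right)^{r-s}x^{b_1+r-1}\bigr]\bigl[\left(\frac{\partial}{\partial x}\right)^{s}\mathcal{F}^{(2)}_2\bigr]$, in which the first bracket contributes the factor $(-1)^{r-s}(1-b_1+s)_{r-s}x^{b_1+s-1}$ and the second is evaluated by the $x$-differentiation companion of the recorded $\theta$-formula, namely $\left(\frac{\partial}{\partial x}\right)^{s}\mathcal{F}^{(2)}_2=\frac{(-1)^{sk}(a)_s(b_1)_s(-t)_{sk}}{(c_1)_s}\mathcal{F}^{(2)}_2(a+s,b_1+s,b_2;c_1+s,c_2;t-sk,k,x,y)$, itself obtained by the same termwise index shift $m\mapsto m+1$. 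Equating the two expansions, simplifying the product $(1-b_1+s)_{r-s}(b_1)_s$ to a single Pochhammer symbol and cancelling the common factor $x^{b_1-1}(b_1)_r$ yields the claim. The formula with $b_2$ and $y$ follows by interchanging the roles of $(x,b_1,c_1,\partial_x)$ and $(y,b_2,c_2,\partial_y)$.

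For the infinite formulae I would run the computation in reverse, starting from the right-hand side. For the $a$-shift formula, inserting the series for $\mathcal{F}^{(2)}_2(a,b_1,b_2;c_1,c_2;t,k,\frac{x}{1-z},\frac{y}{1-z})$ turns the prefactor $(1-z)^{-a}$ together with the factors $(1-z)^{-m-n}$ coming from the scaled arguments into $(1-z)^{-(a+m+n)}$; expanding this by $(1-z)^{-(a+m+n)}=\sum_{r\ge0}\frac{(a+m+n)_r}{r!}z^r$ and applying the identity $(a)_{m+n}(a+m+n)_r=(a)_{m+n+r}=(a)_r(a+r)_{m+n}$ lets me interchange the two summations and recognise the inner double sum as $\mathcal{F}^{(2)}_2(a+r,b_1,b_2;\dots)$. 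The $b_1$- and $b_2$-shift formulae are identical in spirit: one scales a single variable, so one uses $(1-z)^{-(b_1+m)}$ together with the one-index identity $(b_1)_m(b_1+m)_r=(b_1)_r(b_1+r)_m$ (respectively $b_2$ and $n$).

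The one point deserving attention throughout is the coupled factor $(-1)^{(m+n)k}(-t)_{(m+n)k}$, which is what distinguishes $\mathcal{F}^{(2)}_2$ from $\mathcal{F}^{(1)}_2$, depending on $m$ and $n$ only through their sum. In the infinite-sum manipulations this factor involves neither $a$, $b_1$, $b_2$ nor the summation index $r$, so it is inert under the binomial expansion and the Pochhammer splitting and simply rides along into the shifted function. In the finite-sum manipulations the only care needed is to check that the shift $t\mapsto t-sk$ recorded in the differential formula is exactly what absorbs the factor $(-t)_{sk}$ peeled off by repeated $x$-differentiation, via $(-t)_{(m+s)k}=(-t)_{sk}\,(-t+sk)_{mk}$. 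Verifying this bookkeeping is the main, though entirely routine, obstacle; no genuinely new estimate is required.
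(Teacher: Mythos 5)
Your proposal is correct and follows essentially the same route as the paper: the paper omits a proof for $\mathcal{F}^{(2)}_2$, stating that it is obtained exactly as for $\mathcal{F}^{(1)}_2$, and your argument --- the Leibnitz rule combined with the $\partial_x$-form of the differential formulae for the two finite sums, and binomial expansion of $(1-z)^{-(a+m+n)}$ (resp.\ $(1-z)^{-(b_1+m)}$, $(1-z)^{-(b_2+n)}$) together with the Pochhammer splitting identities, starting from the right-hand side, for the three infinite sums --- is precisely that argument. One small notational point: since the factor $(-t)_{(m+n)k}$ couples the two indices, the splitting needed in your finite-sum bookkeeping is $(-t)_{(m+n+s)k}=(-t)_{sk}\,(-t+sk)_{(m+n)k}$ rather than $(-t)_{(m+s)k}=(-t)_{sk}\,(-t+sk)_{mk}$, which changes nothing in the mechanism.
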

\subsection{Recursion Formulae}
We have the following recursion formulae satisfied by the discrete Appell function  $\mathcal{F}^{(2)}_2$:
\begin{align}
	& \mathcal{F}^{(2)}_2 (a + s, b_1, b_2; c_1, c_2 ; t, k, x, y) \nonumber\\
	& = \mathcal{F}^{(2)}_2 (a, b_1, b_2; c_1, c_2; t, k, x, y) \nonumber\\
	& \quad + \frac{(-1)^k \, (-t)_k \, b_1 \, x}{c_1} \, \sum_{r = 1}^{s} \mathcal{F}^{(2)}_2 (a + r, b_1 + 1, b_2; c_1 + 1, c_2; t - k, k, x, y)\nonumber\\
	& \quad + \frac{(-1)^k \, (-t)_k \, b_2 \, y}{c_2} \, \sum_{r = 1}^{s} \mathcal{F}^{(2)}_2 (a + r, b_1, b_2 + 1; c_1, c_2 + 1; t - k, k, x, y);\nonumber\\
	& \mathcal{F}^{(2)}_2 (a - s, b_1, b_2; c_1, c_2 ; t, k, x, y) \nonumber\\
	& = \mathcal{F}^{(2)}_2 (a, b_1, b_2; c_1, c_2 ; t, k, x, y)\nonumber\\
	& \quad - \frac{(-1)^k \, (-t)_k \, b_1 \, x}{c_1} \, \sum_{r = 0}^{s - 1} \mathcal{F}^{(2)}_2 (a - r, b_1 + 1, b_2; c_1 + 1, c_2; t - k, k, x, y)\nonumber\\
	& \quad - \frac{(-1)^k \, (-t)_k \, b_2 \, y}{c_2} \, \sum_{r = 0}^{s - 1} \mathcal{F}^{(2)}_2 (a - r, b_1, b_2 + 1; c_1, c_2 + 1; t - k, k, x, y);\nonumber\\
	& \mathcal{F}^{(2)}_2 (a, b_1 + s, b_2; c_1, c_2 ; t, k, x, y) \nonumber\\
	& = \mathcal{F}^{(2)}_2 (a, b_1, b_2; c_1, c_2 ; t, k, x, y)\nonumber\\
	& \quad + \frac{(-1)^k \, (-t)_k \, a \, x}{c_1} \, \sum_{r = 1}^{s} \mathcal{F}^{(2)}_2 (a + 1, b_1 + r, b_2; c_1 + 1, c_2; t - k, k, x, y);\nonumber\\
	& \mathcal{F}^{(2)}_2 (a, b_1 - s, b_2; c_1, c_2 ; t, k, x, y) \nonumber\\
	& = \mathcal{F}^{(2)}_2 (a, b_1, b_2; c_1, c_2; t, k, x, y) \nonumber\\
	& \quad - \frac{(-1)^k \, (-t)_k \, a \, x}{c_1} \, \sum_{r = 0}^{s - 1} \mathcal{F}^{(2)}_2 (a + 1, b_1 - r, b_2; c_1 + 1, c_2; t - k, k, x, y);\nonumber\\
	& \mathcal{F}^{(2)}_2 (a, b_1, b_2; c_1 - s, c_2; t, k, x, y) \nonumber\\
	& = \mathcal{F}^{(2)}_2 (a, b_1, b_2; c_1, c_2 ; t, k, x, y) \nonumber\\
	& \quad + (-1)^k \, (-t)_k \, a \, b_1 \, x \, \sum_{r = 1}^{s} \frac{\mathcal{F}^{(2)}_2 (a + 1, b_1 + 1, b_2; c_1 + 2 - r, c_2; t - k, k, x, y)}{(c_1 - r) \, (c_1 - r + 1)}\nonumber\\
	& \quad + (-1)^k \, (-t)_k \, a \, b_2 \, y \, \sum_{r = 1}^{s} \frac{\mathcal{F}^{(2)}_2 (a + 1, b_1, b_2 + 1; c_1 + 2 - r, c_2; t - k, k, x, y)}{(c_1 - r) \, (c_1 - r + 1)}.
\end{align}
In the parallel of $\mathcal{F}^{(1)}_2$, we give below a list of first and second order recursion relations satisfied by $\mathcal{F}^{(2)}_2$:
\begin{align}
	& a \, (a - 1) \, \mathcal{F}^{(2)}_2 (a + 1) - (a + \theta + \phi) \, (a + \theta + \phi - 1) \mathcal{F}^{(2)}_2 (a - 1) = 0;\\
	& a \, (b_1 - 1) \, \mathcal{F}^{(2)}_2 (a + 1) - (a + \theta + \phi) \, (b_1 + \theta  - 1) \mathcal{F}^{(2)}_2 (b_1 - 1) = 0;\\
	& a \, (b_2 - 1) \, \mathcal{F}^{(2)}_2 (a + 1) - (a + \theta + \phi) \, (b_2 + \phi - 1) \mathcal{F}^{(2)}_2 (b_2 - 1) = 0;\\
	& a \, c_1 \, \mathcal{F}^{(2)}_2 (a + 1) - (a + \theta + \phi) \, (c_1 + \theta) \mathcal{F}^{(2)}_2 (c_1 + 1) = 0;\\
	& a \, c_2 \, \mathcal{F}^{(2)}_2 (a + 1) - (a + \theta + \phi) \, (c_2 + \phi) \mathcal{F}^{(2)}_2 (c_2 + 1) = 0;\\
	& a \, (b_1 + \theta) \, \mathcal{F}^{(2)}_2 (a + 1) - b_1 \, (a + \theta + \phi) \, \mathcal{F}^{(2)}_2 (b_1 + 1) = 0;\\
	& a \, (b_2 + \phi) \, \mathcal{F}^{(2)}_2 (a + 1) - b_2 \, (a + \theta + \phi) \, \mathcal{F}^{(2)}_2 (b_2 + 1) = 0;\\
	& a \, (c_1 + \theta - 1) \, \mathcal{F}^{(2)}_2 (a + 1) - (c_1 - 1) \, (a + \theta + \phi) \, \mathcal{F}^{(2)}_2 (c_1 - 1) = 0;\\
	& a \, (c_2 + \phi - 1) \, \mathcal{F}^{(2)}_2 (a + 1) - (c_2 - 1) \, (a + \theta + \phi) \, \mathcal{F}^{(2)}_2 (c_2 - 1) = 0;\\
	&(a + \theta + \phi - 1) \, (b_1 + \theta) \, \mathcal{F}^{(2)}_2 (a - 1) - b_1 \, (a - 1)  \, \mathcal{F}^{(2)}_2 (b_1 + 1) = 0;\\
	&	(a + \theta + \phi - 1) \, (b_2 + \phi) \, \mathcal{F}^{(2)}_2 (a - 1) - b_2 \, (a - 1)  \, \mathcal{F}^{(2)}_2 (b_2 + 1) = 0;\\
	&	(a + \theta + \phi - 1) \, (c_1 + \theta - 1) \, \mathcal{F}^{(2)}_2 (a - 1) - (c_1 - 1) \, (a - 1)  \, \mathcal{F}^{(2)}_2 (c_1 - 1) = 0;\\
	& (a + \theta + \phi - 1) \, (c_2 + \phi - 1) \, \mathcal{F}^{(2)}_2 (a - 1) - (c_2 - 1) \, (a - 1)  \, \mathcal{F}^{(2)}_2 (c_2 - 1) = 0;\\
	& (b_1 - 1) \,	(a + \theta + \phi - 1) \,  \mathcal{F}^{(2)}_2 (a - 1) -  (a - 1)  \, (b_1 + \theta - 1) \mathcal{F}^{(2)}_2 (b_1 - 1) = 0;\\
	& (b_2 - 1) \,	(a + \theta + \phi - 1) \,  \mathcal{F}^{(2)}_2 (a - 1) -  (a - 1)  \, (b_2 + \phi - 1) \mathcal{F}^{(2)}_2 (b_2 - 1) = 0;\\
	& c_1 \,	(a + \theta + \phi - 1) \,  \mathcal{F}^{(2)}_2 (a - 1) -  (a - 1)  \, (c_1 + \theta) \mathcal{F}^{(2)}_2 (c_1 + 1) = 0;\\
	& c_2 \,	(a + \theta + \phi - 1) \,  \mathcal{F}^{(2)}_2 (a - 1) -  (a - 1)  \, (c_2 + \phi) \mathcal{F}^{(2)}_2 (c_2 + 1) = 0;\\
	& b_1 \,	(b_1 - 1) \,  \mathcal{F}^{(2)}_2 (b_1 + 1) -  (b_1 + \theta)  \, (b_1 + \theta - 1) \mathcal{F}^{(2)}_2 (b_1 - 1) = 0;\\
	& b_1 \,	(b_2 + \phi) \,  \mathcal{F}^{(2)}_2 (b_1 + 1) -  b_2  \, (b_1 + \theta ) \mathcal{F}^{(2)}_2 (b_2 + 1) = 0;\\
	& b_1 \,	(b_2 - 1) \,  \mathcal{F}^{(2)}_2 (b_1 + 1) -  (b_1 + \theta)  \, (b_2 + \phi - 1) \mathcal{F}^{(2)}_2 (b_2 - 1) = 0;\\
	& b_1 \,	(c_1 + \theta - 1) \,  \mathcal{F}^{(2)}_2 (b_1 + 1) -  (c_1 - 1)  \, (b_1 + \theta) \mathcal{F}^{(2)}_2 (c_1 - 1) = 0;\\
	& b_1 \,	(c_2 + \phi - 1) \,  \mathcal{F}^{(2)}_2 (b_1 + 1) -  (c_2 - 1)  \, (b_1 + \theta) \mathcal{F}^{(2)}_2 (c_2 - 1) = 0;\\
	& b_1 \,	c_1 \,  \mathcal{F}^{(2)}_2 (b_1 + 1) -  (c_1 + \theta)  \, (b_1 + \theta) \mathcal{F}^{(2)}_2 (c_1 + 1) = 0;\\
	& b_1 \,	c_2 \,  \mathcal{F}^{(2)}_2 (b_1 + 1) -  (c_2 + \phi)  \, (b_1 + \theta) \mathcal{F}^{(2)}_2 (c_2 + 1) = 0;\\
	& b_2 \,	(b_1 - 1) \,  \mathcal{F}^{(2)}_2 (b_2 + 1) -  (b_2 + \phi)  \, (b_1 + \theta - 1) \mathcal{F}^{(2)}_2 (b_1 - 1) = 0;\\
	& b_2 \,	(b_2 - 1) \,  \mathcal{F}^{(2)}_2 (b_2 + 1) -  (b_2 + \phi)  \, (b_2 + \phi - 1) \mathcal{F}^{(2)}_2 (b_2 - 1) = 0;\\
	& b_2 \,	(c_1 + \theta - 1) \,  \mathcal{F}^{(2)}_2 (b_2 + 1) -  (c_1 - 1)  \, (b_2 + \phi) \mathcal{F}^{(2)}_2 (c_1 - 1) = 0;\\
	& b_2 \,	(c_2 + \phi - 1) \,  \mathcal{F}^{(2)}_2 (b_2 + 1) -  (c_2 - 1)  \, (b_2 + \phi) \mathcal{F}^{(2)}_2 (c_2 - 1) = 0;\\
	& b_2 \,	c_1 \,  \mathcal{F}^{(2)}_2 (b_2 + 1) -  (c_1 + \theta)  \, (b_2 + \phi) \mathcal{F}^{(2)}_2 (c_1 + 1) = 0;\\
	& b_2 \,	c_2 \,  \mathcal{F}^{(2)}_2 (b_2 + 1) -  (c_2 + \phi)  \, (b_2 + \phi) \mathcal{F}^{(2)}_2 (c_2 + 1) = 0;\\
	& (b_2 - 1) \,	(b_1 + \theta - 1) \,  \mathcal{F}^{(2)}_2 (b_1 - 1) -  (b_1 - 1)  \, (b_2 + \phi - 1) \mathcal{F}^{(2)}_2 (b_2 - 1) = 0;\\
	& (b_1 + \theta - 1) \,	(c_1 + \theta - 1) \,  \mathcal{F}^{(2)}_2 (b_1 - 1) -  (c_1 - 1)  \, (b_1 - 1) \mathcal{F}^{(2)}_2 (c_1 - 1) = 0;\\
	& (b_1 + \theta - 1) \,	(c_2 + \phi - 1) \,  \mathcal{F}^{(2)}_2 (b_1 - 1) -  (c_2 - 1)  \, (b_1 - 1) \mathcal{F}^{(2)}_2 (c_2 - 1) = 0;\\
	& 	c_1 \, (b_1 + \theta - 1) \, \mathcal{F}^{(2)}_2 (b_1 - 1) - (b_1 - 1)  (c_1 + \theta)  \,  \mathcal{F}^{(2)}_2 (c_1 + 1) = 0;\\
	& 	c_2 \, (b_1 + \theta - 1) \, \mathcal{F}^{(2)}_2 (b_1 - 1) - (b_1 - 1)  (c_2 + \phi)  \,  \mathcal{F}^{(2)}_2 (c_2 + 1) = 0;\\
	& (b_2 + \phi - 1) \,	(c_1 + \theta - 1) \,  \mathcal{F}^{(2)}_2 (b_2 - 1) -  (c_1 - 1)  \, (b_2 - 1) \mathcal{F}^{(2)}_2 (c_1 - 1) = 0;\\
	& (b_2 + \phi - 1) \,	(c_2 + \phi - 1) \,  \mathcal{F}^{(2)}_2 (b_2 - 1) -  (c_2 - 1)  \, (b_2 - 1) \mathcal{F}^{(2)}_2 (c_2 - 1) = 0;\\
	& 	c_1 \, (b_2 + \phi - 1) \, \mathcal{F}^{(2)}_2 (b_2 - 1) - (b_2 - 1)  (c_1 + \theta)  \,  \mathcal{F}^{(2)}_2 (c_1 + 1) = 0;\\
	& 	c_2 \, (b_2 + \phi - 1) \, \mathcal{F}^{(2)}_2 (b_2 - 1) - (b_2 - 1)  (c_2 + \phi)  \,  \mathcal{F}^{(2)}_2 (c_2 + 1) = 0;\\
	& 	c_1 \, (c_1 - 1) \, \mathcal{F}^{(2)}_2 (c_1 - 1) - (c_1 + \theta - 1)  (c_1 + \theta)  \,  \mathcal{F}^{(2)}_2 (c_1 + 1) = 0;\\
	& 	 (c_1 - 1) \, (c_2 + \phi - 1) \, \mathcal{F}^{(2)}_2 (c_1 - 1) - (c_2 - 1)  \, (c_1 + \theta - 1)    \mathcal{F}^{(2)}_2 (c_2 - 1) = 0;\\
	& 	c_2 \, (c_1 - 1) \, \mathcal{F}^{(2)}_2 (c_1 - 1) - (c_1 + \theta - 1)  (c_2 + \phi)  \,  \mathcal{F}^{(2)}_2 (c_2 + 1) = 0;\\
	& 	 (c_1 + \theta) \, (c_2 + \phi - 1) \, \mathcal{F}^{(2)}_2 (c_1 + 1) - c_1 \, (c_2 - 1)  \,  \mathcal{F}^{(2)}_2 (c_2 - 1) = 0;\\
	& 	c_2 \, (c_1 + \theta) \, \mathcal{F}^{(2)}_2 (c_1 + 1) - c_1 \, (c_2 + \phi)  \,  \mathcal{F}^{(2)}_2 (c_2 + 1) = 0;\\
	& 	c_2 \, (c_2 - 1) \, \mathcal{F}^{(2)}_2 (c_2 - 1) - (c_2 + \phi - 1)  (c_2 + \phi)  \,  \mathcal{F}^{(2)}_2 (c_2 + 1) = 0.
\end{align}
Similarly using  the difference-differential relations, 
\begin{align}
	&	a \, \mathcal{F}^{(2)}_2 (a + 1)  = \left(a + \frac{1}{k} \Theta_{t} \right) \, \mathcal{F}^{(2)}_2;\\
	& \left(a + \frac{1}{k} \Theta_{t} - 1\right) \, \mathcal{F}^{(2)}_2 (a - 1)  =	(a - 1) \, \mathcal{F}^{(2)}_2;\\
	&	b_1 \, \mathcal{F}^{(2)}_2 (b_1 + 1)  = \left(b_1 + \theta\right) \, \mathcal{F}^{(2)}_2;\\
	&\left(b_1 + \theta - 1\right) \, \mathcal{F}^{(2)}_2 (b_1 - 1)  =	(b_1 - 1) \, \mathcal{F}^{(2)}_2;\\
	&	b_2 \, \mathcal{F}^{(2)}_2 (b_2 + 1)  = \left(b_2 + \phi\right) \, \mathcal{F}^{(2)}_2;\\
	& \left(b_2 + \phi - 1\right) \, \mathcal{F}^{(2)}_2 (b_2 - 1)  =	(b_2 - 1) \, \mathcal{F}^{(2)}_2;\\
	&	(c_1 - 1) \, \mathcal{F}^{(2)}_2 (c_1 - 1)  = \left(c_1 + \theta - 1\right) \, \mathcal{F}^{(2)}_2;\\
	& \left(c_1 + \theta\right) \, \mathcal{F}^{(2)}_2 (c_1 + 1)  =	c_1 \, \mathcal{F}^{(2)}_2;\\
	&	(c_2 - 1) \, \mathcal{F}^{(2)}_2 (c_2 - 1)  = \left(c_2 + \phi - 1\right) \, \mathcal{F}^{(2)}_2;\\
	& \left(c_2 + \phi\right) \, \mathcal{F}^{(2)}_2 (c_2 + 1)  =	c_2 \, \mathcal{F}^{(2)}_2,
\end{align}
one will get the following difference-differential recursion relations:
\begin{align}
	& a \, (a - 1) \, \mathcal{F}^{(2)}_2 (a + 1) - \left(a + \frac{1}{k} \Theta_{t}\right) \, \left(a + \frac{1}{k} \Theta_{t} - 1\right) \mathcal{F}^{(2)}_2 (a - 1) = 0;\\
	& a \, (b_1 - 1) \, \mathcal{F}^{(2)}_2 (a + 1)  - \left(a + \frac{1}{k} \Theta_{t} \right) \, \left(b_1 + \theta  - 1\right) \mathcal{F}^{(2)}_2 (b_1 - 1) = 0;\\
	& a \, (b_2 - 1) \, \mathcal{F}^{(2)}_2 (a + 1) - \left(a + \frac{1}{k} \Theta_{t}\right) \, \left(b_2 + \phi - 1\right) \mathcal{F}^{(1)}_1 (b_2 - 1) = 0;\\
	& a \, c_1 \, \mathcal{F}^{(2)}_2 (a + 1)  - \left(a + \frac{1}{k} \Theta_{t}\right) \, \left(c_1 + \theta \right) \mathcal{F}^{(2)}_2 (c_1 + 1) = 0;\\
	& a \, c_2 \, \mathcal{F}^{(2)}_2 (a + 1)  - \left(a + \frac{1}{k} \Theta_{t}\right) \, \left(c_2 + \phi \right) \mathcal{F}^{(2)}_2 (c_2 + 1) = 0;\\
	& a \, \left(b_1 + \theta\right) \, \mathcal{F}^{(2)}_2 (a + 1) - b_1 \, \left(a + \frac{1}{k} \Theta_{t}\right) \, \mathcal{F}^{(2)}_2 (b_1 + 1) = 0;\\
	& a \, \left(b_2 + \phi\right) \, \mathcal{F}^{(2)}_2 (a + 1) - b_2 \, \left(a + \frac{1}{k} \Theta_{t}\right) \, \mathcal{F}^{(2)}_2 (b_2 + 1) = 0;\\
	& a \, \left(c_1 + \theta - 1\right) \, \mathcal{F}^{(2)}_2 (a + 1)  - (c_1 - 1) \, \left(a + \frac{1}{k} \Theta_{t}\right) \, \mathcal{F}^{(2)}_2 (c_1 - 1) = 0;\\
	& a \, \left(c_2 + \phi - 1\right) \, \mathcal{F}^{(2)}_2 (a + 1)  - (c_2 - 1) \, \left(a + \frac{1}{k} \Theta_{t}\right) \, \mathcal{F}^{(2)}_2 (c_2 - 1) = 0;\\
	&\left(a + \frac{1}{k} \Theta_{t} - 1\right) \, \left(b_1 + \theta \right) \, \mathcal{F}^{(2)}_2 (a - 1) - b_1 \, (a - 1)  \, \mathcal{F}^{(2)}_2 (b_1 + 1) = 0;\\
	&	\left(a + \frac{1}{k} \Theta_{t} - 1\right) \, \left(b_2 + \phi\right) \, \mathcal{F}^{(2)}_2 (a - 1) - b_2 \, (a - 1)  \, \mathcal{F}^{(2)}_2 (b_2 + 1) = 0;\\
	&	\left(a + \frac{1}{k} \Theta_{t} - 1\right) \, \left(c_1 + \theta - 1\right) \, \mathcal{F}^{(2)}_2 (a - 1)  - (c_1 - 1) \, (a - 1)  \, \mathcal{F}^{(2)}_2 (c_1 - 1) = 0;\\
	&	\left(a + \frac{1}{k} \Theta_{t} - 1\right) \, \left(c_2 + \phi - 1\right) \, \mathcal{F}^{(2)}_2 (a - 1)  - (c_2 - 1) \, (a - 1)  \, \mathcal{F}^{(2)}_2 (c_2 - 1) = 0;\\
	& (b_1 - 1) \,	\left(a + \frac{1}{k} \Theta_{t} - 1\right) \,  \mathcal{F}^{(2)}_2 (a - 1)  -  (a - 1)  \, \left(b_1 + \theta  - 1\right) \mathcal{F}^{(2)}_2 (b_1 - 1) = 0;\\
	& (b_2 - 1) \,	\left(a + \frac{1}{k} \Theta_{t} - 1\right) \,  \mathcal{F}^{(2)}_2 (a - 1)  -  (a - 1)  \, \left(b_2 + \phi - 1\right) \mathcal{F}^{(2)}_2 (b_2 - 1) = 0;\\
	& c_1 \,	\left(a + \frac{1}{k} \Theta_{t} - 1\right) \,  \mathcal{F}^{(2)}_2 (a - 1) -  (a - 1)  \, \left(c_1 + \theta\right) \mathcal{F}^{(2)}_2 (c_1 + 1) = 0;\\
	& c_2 \,	\left(a + \frac{1}{k} \Theta_{t} - 1\right) \,  \mathcal{F}^{(2)}_2 (a - 1) -  (a - 1)  \, \left(c_2 + \phi\right) \mathcal{F}^{(2)}_2 (c_2 + 1) = 0.
\end{align}

\section{Conclusion} 
In the present paper, we continued the study of discrete analogues of Appell functions and worked out on the two distinct discrete analogues $\mathcal{F}^{(1)}_2$ and $\mathcal{F}^{(2)}_2$ of the Appell function $F_2$. The discrete forms of $F_2$ are not limited to these two functions, one can also get the other discrete forms of $F_2$. For example, if $k_1 = k = k_2$, then the discrete function $\mathcal{F}^{(1)}_2$ leads to the third discrete analogue of Appell function $F_2$ as
\begin{align}
	\mathcal{F}^{(3)}_2 & = \mathcal{F}^{(3)}_2(a, b_1, b_2; c_1, c_2; t_1, t_2, k, x, y)\nonumber\\ 
& = \sum_{m,n\geq0} \frac{(a)_{m+n} \, (b_1)_m \, (b_2)_n \, (-1)^{(m + n) k} \, (-t_1)_{mk} \,   (-t_2)_{nk}}{ (c_1)_{m} \, (c_2)_n\, m! \, n!} \ x^m \, y^n.
\end{align} 
We also introduced the discrete analogues of Humbert functions $\psi_1$ and $\psi_2$ as limiting cases of $\mathcal{F}^{(1)}_2$ and $\mathcal{F}^{(2)}_2$ but discrete Humbert functions can be studied independently. So there is an opportunity of investigating these discrete functions in comparative way and believe to yield new and interesting results.

The particular cases of the results obtained in this paper lead to the corresponding results for Appell function $F_2$ and  Kamp\'e de F\'eriet  hypergeometric function. Some results \emph{viz.} differential equations, differential formulas, integral representations, finite and infinite summation formulas along with others while the particular cases of first and second order difference-differential recurrence relations are believed to be new.

%\medskip
%{\bf Acknowledgments:} The authors thank the referee for valuable suggestions that led to a better presentation of the paper. The financial assistance provided to the first author in the form of a Junior Research Fellowship from Council of Scientific and Industrial Research, India is gratefully acknowledged.

\end{document}